
\documentclass[11pt,a4paper]{article} 


\usepackage[utf8]{inputenc}


\usepackage{enumerate}  
\usepackage[english]{babel}
\usepackage{amsmath}
\usepackage{amsthm}
\usepackage{amsfonts}
\usepackage{amssymb,bm}
\usepackage{mathrsfs}
\usepackage{stmaryrd}

\usepackage{graphicx}

\usepackage{comment}

\usepackage[expansion]{microtype}
\usepackage{stmaryrd}

\usepackage[pdftex,colorlinks=true]{hyperref}

\hypersetup{
    pdfauthor   = {Enriquez Faraud Ménard Noiry},%
    pdftitle    = {DFS supercritical Configuration Model}}

\topmargin=-1.5cm   
\textheight=23cm
\textwidth=16cm
\oddsidemargin=0cm
\evensidemargin=0cm

\usepackage[font={small,sf}, labelfont={sf,bf}, margin=1cm]{caption}

\newtheorem{lemma}{Lemma}
\newtheorem{proposition}{Proposition}
\newtheorem{definition}{Definition}
\newtheorem{theorem}{Theorem}
\newtheorem{corollary}{Corollary}
\newtheorem{remark}{Remark}
\newtheorem*{acks}{Acknowledgements}

\newcommand\E{ \mathbb{E} }

\newcommand\PP{ \mathbb{P} }







\title{Depth First Exploration of a Configuration Model}
\author{Nathana\"el Enriquez, Gabriel Faraud, Laurent Ménard and Nathan Noiry}
\date{}

\newcommand\geom{\mathfrak{e}}

\begin{document}
\maketitle

\begin{abstract}
We introduce an algorithm that constructs a random graph with prescribed degree sequence together with a depth first exploration of it. In the so-called supercritical regime where the graph contains a giant component, we prove that the renormalized contour process of the Depth First Search Tree has a deterministic limiting profile that we identify. The proof goes through a detailed analysis of the evolution of the empirical degree distribution of unexplored vertices. This evolution is driven by an infinite system of differential equations which has a unique and explicit solution. As a byproduct, we deduce the existence of a macroscopic simple path and get a lower bound on its length.
\end{abstract}



\section{Introduction}

Historically, the configuration model was introduced by Bender and Canfield \cite{BenderCanfield78}, Bollob\'as \cite{Bollobas80} and Wormald \cite{wormald1980some} as a random multigraph with $N$ vertices and prescribed degree sequence $d_1,\ldots,d_N$. It turns out that this model shares a lot of features with the Erd\H{o}s-Rényi random graph. In particular it exhibits a phase transition for the existence of a unique macroscopic connected component. This phase transition, as well as the size of this so-called giant component, was studied in detail in \cite{MolloyReed95,MolloyReed98,JansonLuczak2009}. The proof of these results relies on the analysis of a construction algorithm which takes as input a collection of $N$ vertices having respectively $d_1, \ldots, d_N$ half-edges coming out of them, and returns as output a random multigraph with degree sequence $d_1, \ldots, d_N$, by connecting step by step the half-edges. The way \cite{MolloyReed95,MolloyReed98,JansonLuczak2009,MR3343756} connect these half-edges is as follows: at a given step in this algorithm, a uniform half-edge of the growing cluster is connected to a uniform not yet connected half-edge.

In this paper, we introduce a construction algorithm which, in addition to constructing the configuration model, provides an exploration of it. This exploration corresponds to the Depth First Search algorithm which is roughly a nearest neighbor walk on the vertices that greedily tries to go as deep as possible in the graph. The output of the Depth First Search Algorithm is a spanning rooted plane tree for each connected component of the graph, whose height provides a lower bound on the length of the largest simple path in the corresponding component.

A similar exploration (namely, a breadth-first exploration) has been successfuly used by Aldous \cite{Aldous1997} for the Erd\H{o}s-Rényi model in the critical window where the connected componenents are of polynomial size. The structure of the graph in this window was further studied in \cite{AddarioBroutinGold2012}. For the configuration model, a similar critical window was also identified and studied. See \cite{HM,Riordan2012,MR3343756,DvdHLS}.

\bigskip
 
The purpose of this article is to study this algorithm on a supercritical configuration model and in particular the limiting shape of the contour process of the tree associated to the Depth First exploration of the giant component. Unlike in the previous construction of \cite{MolloyReed95,MolloyReed98,JansonLuczak2009,MR3343756}, where the authors only studied the evolution of the empirical distribution of the degree of the unexplored vertices, we have to deal with the empirical distribution of the degree of the unexplored vertices in the graph that they induce inside the final graph. The analysis of this evolution is much more delicate and is in fact the heart of our work, this is the content of Proposition \ref{prop:law}.

It turns out that a step by step analysis of the construction does not work. Still, it is possible to track, at some ladder times, the evolution of the degrees of the unexplored vertices in the graph they induce. In this time scale, using a generalization of the celebrated differential equations method of Wormald \cite{Wormald} provided in the appendix (see also \cite{Warnke} for a recent article on this method), we are able to show that the evolution of the empirical degree distribution of the unexplored vertices has a fluid limit which is driven by an infinite system of differential equations. This system as such cannot be handled. We have to introduce a time change which, surprisingly, corresponds to the proportion of explored vertices, in term of the construction algorithm. Another surprise is that the resulting new system of differential equations admits an explicit solution through the generating series they form. In order to apply Wormald's method, we need to establish the uniqueness of this solution. This task, presented in Section \ref{sec: EqDiff}, is also intricate and is based on the knowledge of the explicit solution mentioned above.

Combining Proposition \ref{prop:law} with an analysis of the ladder times, we prove that the renormalized contour process of the spanning tree of the Depth First Search algorithm converges to a deterministic profile for which we give an explicit parametric representation. This is the object of Theorem \ref{th:profile}. A direct consequence is a lower bound on the length of the longest simple path in a supercritical model, see Corollary \ref{coro:length}. To the best of our knowledge, this lower bound seems to be the best available for a generic initial degree distribution. The only other generic bound for configuration models that we could find is due to Frieze and Jackson \cite{FriezeJackson87} in the setting where the graphs have bounded degrees. They establish a lower bound on the longest induced path. However, this bound vanishes as the largest degree tends to infinity.

We do not believe that our bound is sharp. The question of the length of the longest simple path in a configuration model is actually still open in generic cases. To the best of our knowledge, the only solved cases are $d$-regular random graphs that are known to be (almost) Hamiltonian \cite{Bollobas1983}. However, a main advantage of our bound is that it is given by an explicit construction in linear time, which is not the case for the regular graphs setting. For additional details and references on the Erd\H{o}s-Rényi setting, we refer to the survey \cite{Ksurvey}  and to the article \cite{AF}.

Let us mention that the ingredient of ladder times, used in the proof of Theorem \ref{th:profile}, was already present in the context of Erd\H{o}s-R\'enyi graphs in \cite{EnriquezFaraudMenard20}. The novelty and core of the present article is the analysis of the empirical degree distribution of the unexplored vertices at the ladder times, which was straightforward in the case of Erd\H{o}s-R\'enyi graphs as it is in that case, along the construction, a Binomial distribution with decreasing parameter.

In order to illustrate our results, we provide explicit computations together with simulations in the setting where the initial degree distribution follows respectively a Poisson law (recovering results of \cite{EnriquezFaraudMenard20} in the Erd\H{o}s-Rényi setting), a Dirac mass at $d\geq 3$ (corresponding to $d$-regular random graphs) and a Geometric law. We also discuss briefly the heavy tailed case which also falls into the scope of our results.

\section{Definition of the DFS exploration and main results} 

\subsection{The Depth First Search algorithm} \label{sec:defnDFS}

Consider a multigraph $\mathrm{G} = (\mathrm V, \mathrm E)$ whit vertex set $\mathrm V = \{1, \ldots, N \}$. The DFS exploration of $\mathrm G$ is the following algorithm.

For every step $n$ we consider the following objects, defined by induction.
\begin{itemize}
\item $A_n$, the active vertices, is an ordered list of elements of $\mathrm{V}$.
\item $S_n$, the sleeping vertices, is a subset of $\mathrm{V}$. This subset will never contain a vertex of $A_n$.
\item $R_n$, the retired vertices, is another subset of $\mathrm{V}$ composed of all the vertices that are neither in $A_n$ nor $S_n$.
\end{itemize}
At time $n=0$, choose a vertex $v$ uniformly at random. Set:
\[
\begin{cases}
A_0 &= (v) , \\
S_0 &= \mathrm{V}_N \setminus \{v\}, \\
R_0 &= \emptyset.
\end{cases}
\]
Suppose that $A_n$, $S_n$ and $R_n$ are constructed. Three cases are possible.
\begin{enumerate}
\item If $A_n = \emptyset$, the algorithm has just finished exploring a connected component of $\mathrm G$. In that case, we pick a vertex $v_{n+1}$ uniformly at random inside $S_n$ and set:
\[
\begin{cases}
A_{n+1} &= \left(v_{n+1} \right), \\
S_{n+1} &= S_n \setminus \{v_{n+1}\}, \\
R_{n+1} &= R_n.
\end{cases}
\]

\item If $A_n \neq \emptyset$ and if its last element $u$ has no neighbor in $S_n$, the DFS backtracks and we set:
\[
\begin{cases}
A_{n+1} &= A_n - u, \\
S_{n+1} &= S_n,  \\
R_{n+1} &= R_n \cup \{u\}.
\end{cases}
\]

\item If $A_n \neq \emptyset$ and if its last element $u$ has a neighbor in $S_n$, the DFS goes to the smallest neighbor of $u$, say $v$, and we set:
\[
\begin{cases}
A_{n+1} &= A_n + v \\
S_{n+1} &= S_n \setminus \{v\},  \\
R_{n+1} &= R_n.
\end{cases}
\]
\end{enumerate}

This algorithm explores the whole graph and provides a spanning tree of each connected component. In Section \ref{sec:ConstrDFS}, we will provide an algorithm that constructs simultaneously a random graph and a DFS on it.

The algorithm finishes after $2N$ steps. For every $0 \leq n \leq 2N$, we set $X_n := |A_n|$. This walk is called the {\it contour process} associated to the spanning forest of the DFS. In words, it is a $\pm 1$ walk that starts at $X_0 = 0$, stays nonnegative and ends at $X_{2N}=0$, which increases by $1$ each time the DFS moves on (corresponding to point 1. or 2.) and decreases by one each time the DFS backtracks (corresponding to point 3.). Notice that $X_n=0$ when the process starts the exploration of a new connected component. Therefore, each excursion of $(X_n)$ corresponds to a connected component of $\mathrm{G}$. 

\subsection{The Configuration model}

We now turn to the definition of the configuration model.

\begin{definition}
Let $\mathbf{d} = (d_1, \ldots, d_N) \in \mathbb{Z}^N_+$ be such that $d_1 + \cdots + d_N$ is even. We interpret $d_i$ as a number of half-edges attached to vertex i. Then, the configuration model $\mathscr{C}(\mathbf{d})$ associated with the degree sequence $\mathbf{d}$ is the random multigraph with vertex set $\{1, \ldots, N\}$ obtained by a uniform matching of these half-edges. If $d_1 + \cdots + d_N$ is odd, we change $d_N$ into $d_N +1$ and do the same construction.
\end{definition}

We will study sequences of configuration models whose associated sequence of empirical degree distribution converges to a given probability measure.

\begin{definition} \label{def:confasym}
Let $\bm {\pi}$ be a probability distribution on $\mathbb{Z}_+$. For every $N \geq 1$, let $\mathbf{d}^{(N)} = (d^{(N)}_1, \ldots, d^{(N)}_N) \in \mathbb{Z}^N_+$. We say that $( \mathscr{C}(\mathbf{d}^{(N)}))_{N\geq 1}$ has asymptotic degree distribution $\bm \pi$ if
\[  \forall k\geq 0, \quad \frac{1}{N} \sum\limits_{i=1}^N \mathbf{1}_{\{d^{(N)}_i = k\}} \underset{  N \rightarrow + \infty}{ \longrightarrow} \bm \pi (\{k\}). \]
\end{definition} 

As observed in \cite{MolloyReed95}, the configuration model exhibits a phase transition for the existence of a unique macroscopic connected component. In this article, we will restrict our attention to supercritical configuration models, that is where this giant component exists.

\begin{definition}\label{defn:supercritical}
Let $\bm \pi$ be a probability distribution on $\mathbb Z_+$ such that $\sum_{k\geq 0} \bm \pi(\{k\}) k^2  <\infty$ and denote by $f_{\bm \pi}$ its generating function. Let $ \hat{ \bm  \pi}$ be the probability distribution having generating function
\[
\widehat f_{\bm \pi} (s):= f_{ \hat{ \bm  \pi}} (s) = \frac{ f'_{\bm \pi}(s)}{f'_{\bm \pi}(1)}.
\]
We say that $\bm \pi$ is supercritical if $\widehat{f_{\bm \pi}}^\prime (1) > 1$. Notice that, denoting by $D_{\bm \pi}$ a random variable with law $\bm \pi$, it is equivalent to
\[  \frac{\mathbb{E}[ D_{\bm \pi} (D_{\bm \pi} -1) ]}{\mathbb{E}[ D_{\bm \pi} ]} > 1.  \]
In that case we define $\rho_{\bm \pi}$ to be the smallest positive solution of the equation
\[
1 - \rho_{\bm \pi} = \widehat f_{\bm \pi} (1 - \rho_{\bm \pi}).
\]
Finally, we set
\[
\xi_{\bm \pi} := 1 - f_{\bm \pi}(1 - \rho_{\bm \pi}).
\]
\end{definition}
The number $\rho_{\bm \pi}$ is the probability that a Galton-Watson tree with distribution $\widehat{\bm \pi}$ is infinite, whereas the number $\xi_{\bm \pi}$ is the survival probability of a tree where the root has degree distribution $\bm \pi$ and individuals of the next generations have a number of children distributed according to $\widehat{\bm \pi}$. In this article, we study sequence of configuration models $\mathscr{C} ( \mathbf{d}^{(N)} )$ whose asymptotic degree distribution is a supercritical probability measure $\bm \pi$. 

Denoting by $\mathrm{C}_1^{(N)}, \mathrm{C}_2^{(N)}, \ldots$ the sequence of connected components of $\mathcal{C}(\mathbf{d}^{(N)})$ ordered by decreasing number of vertices,  one has
\[  \frac{|\mathrm{C}_1^{(N)}|}{N} \overset{ \mathbb{P}}{\underset{N \rightarrow +\infty}{ \longrightarrow}} \rho_\pi, \]
and the other connected components are microscopic, see for example \cite{MolloyReed95,MolloyReed98,JansonLuczak2009,MR3343756}.

We finally make the two following technical assumptions:
\begin{itemize}
\item The following convergence holds:
\begin{equation} 
\lim\limits_{N \rightarrow + \infty}  \frac{ {d_1^{(N)}}^2 + \cdots + {d_N^{(N)}}^2}{N}  = \sum\limits_{k \geq 0} k^2 {\bm \pi}(\{k\}). \tag{{\bf A1}}
\label{assump: moment}
\end{equation}
\item  There exists $\gamma > 2$ and $C>0$ such that
\begin{equation}
 \max \left\{d_1^{(N)} ,\ldots, d_N^{(N)} \right\} \leq C\, N^{1/\gamma}. \tag{{\bf A2}}
\label{assump: degree}
\end{equation}
\end{itemize}
Assumption \eqref{assump: moment} is a classical assumption and is needed to get estimates on the size of the giant component, see \cite{vdH}. Assumption  \eqref{assump: degree} is a (weak) technical assumption needed for our approach and may not be optimal.

\subsection{Main results}

We now state our first result. Define $\alpha\ge 0$ and consider the graph induced by the sleeping vertices after having explored $\lfloor \alpha N \rfloor$ vertices when performing the DFS algorithm on a configuration model. It is clear that this induced graph is also a configuration model. The purpose of the following theorem is to identify its asymptotic degree distribution. It turns out this distribution only depends on $\alpha$ and on the initial degree distribution $\bm \pi$.
\begin{proposition} \label{prop:law}
Let $\bm \pi$ be a supercritical probability measure on $\mathbb{Z}_+$ with generating series $f$ and let $( \mathscr{C}(\mathbf{d}^{(N)}) )_{ N \geq 1}$ be a configuration model with supercritical asymptotic degree distribution $\bm \pi$.  Assume {\bf (A1)} and {\bf (A2)}.

Let $\alpha_c$ be the smallest positive solution of the equation
\[ \frac{f_{\bm \pi}^{''}\left( f_{\bm \pi}^{-1}(1-\alpha)  \right)}{f_{\bm \pi}^{'}(1)} = 1.  \]
For every $\alpha \in [0,\alpha_c]$, let $\bm \pi _{\alpha}$ be the probability distribution on $\mathbb{Z}_+$ with generating series
\[  g(\alpha,s) = \frac{1}{1-\alpha} f_{\bm \pi} \left(  f_{\bm \pi}^{-1}(1-\alpha) - (1-s) \frac{f_{\bm \pi}'(f_{\bm \pi}^{-1}(1-\alpha))}{f_{\bm \pi}'(1)} \right), \]
and write $\tau^{(N)}(\alpha) = \inf \{ k \geq 1: \, |S^{(N)}_k| \leq (1-\alpha)N \}$. Then, conditionally on their degree sequence, the graphs induced by the vertices of $S^{(N)}_{\tau^{(N)}(\alpha)}$ inside $\mathscr{C}(\mathbf{d}^{(N)})$ have the law of configuration models with asymptotic degree distribution $\bm \pi_\alpha$.
\end{proposition}

\begin{remark}
We consider $\alpha$ up to some constant $\alpha_c$, which corresponds to the time when so many vertices have been visited that the remaining graph of sleeping vertices is subcritical. 
\end{remark}

Our main result describes the asymptotic behavior of the contour process $X_n = |A_n|$ of the plane forest constructed by the DFS on a configuration model. 

\begin{theorem} \label{th:profile}
Under the assumptions of Proposition \ref{prop:law}, the following limit holds in probability for the topology of uniform convergence: 
$$ \forall t \in [0,2], \quad \lim_{N\to \infty} \frac{X_{\lceil tN\rceil }}{N}=h(t),$$
where the function $h$ is continuous on $[0, 2]$, null on the interval $[2 \xi_{\bm \pi}, 2]$ and defined below on the interval $[0,2\xi_{\bm \pi}]$.

There exists a unique implicit function $\alpha (\rho)$ defined on $[0,\rho_{\bm \pi}]$ such that $1-\rho = \widehat{g}(\alpha(\rho),1-\rho)$ where, for any $\alpha \in [0,\alpha_c]$, the function $s \mapsto \widehat{g}(\alpha,s)$ is the size-biased version of $s \mapsto{g}(\alpha,s)$ defined in Proposition \ref{prop:law}, namely $\widehat{g}(\alpha,s) = \partial_s g(\alpha,s) / \partial_s g(\alpha,1)$.
The graph $(t,h(t))_{t \in [0, 2 \xi_{\bm \pi}]}$ can be divided into a first increasing part and a second decreasing part.
These parts are respectively parametrized for $\rho \in [0, \rho_{\bm \pi}]$ by :
\[
\begin{cases}
x^\uparrow(\rho) & := (2-\rho) \, \alpha(\rho) - \int_{\rho}^{\rho_{\bm \pi}} \alpha(u) \mathrm{d}u, \\
y^\uparrow(\rho) & := \rho \, \alpha(\rho) + \int_{\rho}^{\rho_{\bm \pi}} \alpha(u) \mathrm{d}u,
\end{cases}
\]
for the increasing part and
\[
\begin{cases}
x^\downarrow(\rho) := x^\uparrow (\rho) + 2 \, \left(1- \alpha(\rho) \right) \bigg( 1 -  g \big(\alpha(\rho),1-\rho \big) \bigg),\\
y^\downarrow(\rho) := y^\uparrow(\rho),
\end{cases}
\]
for the decreasing part.
\end{theorem}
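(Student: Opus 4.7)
The plan is to combine Theorem~\ref{th:law} with a ladder-time analysis of the contour process $X^{(N)}$. I first focus on the excursion corresponding to the giant component. For each height $h$, introduce the ladder time $T^{(N)}_h := \inf\{n \geq 0 : X^{(N)}_n = h\}$ and write $\alpha^{(N)}_h := (|A_{T^{(N)}_h}| + |R_{T^{(N)}_h}|)/N$ for the fraction of visited vertices at that moment. The elementary counting identity $n + 1 = |A_n| + 2|R_n|$ valid along the DFS yields $T^{(N)}_h/N = 2\alpha^{(N)}_h - h/N + O(1/N)$, which is the source of the relation $x + y = 2\alpha$ linking the two coordinates on the increasing part.

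By Theorem~\ref{th:law}, at time $T^{(N)}_h$ the sleeping graph is a configuration model with asymptotic degree distribution $\bm\pi_\alpha$ of generating function $g(\alpha,\cdot)$, while the top of the active path is a freshly visited vertex whose remaining sleeping degree is size-biased with generating series $\widehat g(\alpha,\cdot)$. In the limit, each outgoing half-edge of this vertex leads to an infinite cluster of the sleeping graph with probability $\rho$, where $\rho$ solves $1 - \rho = \widehat g(\alpha, 1-\rho)$; this defines the implicit function $\alpha \mapsto \rho$ whose inverse $\rho \mapsto \alpha(\rho)$ appears in the statement. Between two consecutive ladder times, the walker explores a concentrated number of finite subtrees hanging off the current spine top before extending the path through the first half-edge it picks that leads to an infinite cluster. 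A concentration argument based on the generalized Wormald method used in Theorem~\ref{th:law} yields, in the fluid limit, the ODE $d\alpha/dy = 1/\rho(\alpha)$. After the change of variable $y \to \rho$ via $1-\rho = \widehat g(\alpha,1-\rho)$, this reads $dy^\uparrow/d\rho = \rho\,\alpha'(\rho)$ and $dx^\uparrow/d\rho = (2-\rho)\,\alpha'(\rho)$, which, when integrated by parts from $\rho = \rho_{\bm\pi}$ (initial state, $\alpha = 0$) down to $\rho = 0$ (apex, $\alpha = \alpha_c$), give the stated parametric form of the increasing part.

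The decreasing part comes from the backtracking phase starting at the apex. At each step back, the walker retires an active vertex, but only after probing the sleeping subtrees still attached to it, which are now almost surely finite since the sleeping graph has gone subcritical. The probability that a uniformly chosen remaining sleeping vertex is visited before the end of the excursion equals $1 - g(\alpha(\rho), 1-\rho)$ (the complement of the GW extinction probability in the induced configuration model), and each such visit costs $2$ units of time and $0$ units of height; this yields the correction $2(1-\alpha(\rho))(1 - g(\alpha(\rho),1-\rho))$ added to $x^\uparrow$ to produce $x^\downarrow$. Uniform convergence on $[0,2]$ finally follows from the $1$-Lipschitz bound $|X^{(N)}_{n+1} - X^{(N)}_n| \leq 1$ combined with pointwise convergence at the grid of ladder times. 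The main obstacle is passing from the ladder-time scale to the differential fluid limit: one must show that the total $\alpha$-length of the finite-subtree excursions between two consecutive ladder times is concentrated uniformly in $\alpha \in [0,\alpha_c)$, which becomes delicate near the apex where expected finite-subtree sizes diverge.
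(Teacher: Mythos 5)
Your proposal follows the paper's proof essentially step by step: ladder times, the counting identity giving $x+y = 2\alpha$, the slope relation $d\alpha/dy = 1/\rho$ obtained from the Wormald-type fluid limit (Theorem~\ref{theo: fluidlimits}), the change of variable to $\rho$ followed by integration by parts, and the interpretation of $x^\downarrow - x^\uparrow$ as twice the normalized size of the giant component of the remaining graph. The approach is the right one and the formulas check out.

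One inaccuracy is worth flagging: your ladder time $T^{(N)}_h := \inf\{n : X^{(N)}_n = h\}$ omits the stability condition $\forall j \in [i, i+N^\delta], \, X_j \geq k+1$ used in the paper's definition. Without it, $T^{(N)}_h$ can be reached inside a finite sub-excursion that immediately dies, in which case the walker is not on the spine at that time, and the computation of $\E[T_{k+1}-T_k \mid \mathcal F_k]$ in Theorem~\ref{theo: fluidlimits} (which conditions on $v_k$ being connected to the giant of $\mathscr S_k$ through a \textbf{Surv} half-edge) does not apply as stated. The difference between the two definitions is $o(N)$ on the good event $\mathbf G_\varepsilon$, so the limiting profile is unaffected, but the increment analysis needs the stabilized ladder times to go through. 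Similarly, the degree of the spine vertex is size-biased \emph{conditioned on surviving}, cf.\ \eqref{eq: DegSurv}, not plain size-biased; this does not change $1-\rho = \widehat g(\alpha,1-\rho)$ but should not be glossed over. Finally, the concentration obstacle you correctly identify near the apex is handled in the paper by restricting to $\alpha \leq \alpha_\varepsilon^{(N)}$ and then letting $\varepsilon \to 0$.
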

A direct consequence of this result in the following.
\begin{corollary} \label{coro:length}
Let $\mathcal{H}_N$ be the length of the longest simple path in a configuration model of size $N$ with asymptotic distribution $\bm \pi$ satisfying hypothesis of Proposition \ref{prop:law}. Then, with the notation of Theorem \ref{th:profile},
\[  \forall \varepsilon>0, \quad \mathbf{P} \left( \frac{\mathcal{H}_N}{N}  \geq y^\uparrow(0) - \varepsilon = \int_0^ {\rho_{\bm \pi}} \alpha(u) \mathrm{d}u - \varepsilon \right) \underset{ N \rightarrow +\infty}{ \longrightarrow} 1. \]
\end{corollary}

\begin{remark}
Note that the formulas in Proposition \ref{prop:law} and Theorem \ref{th:profile} have a meaning when $\bm \pi$ has a first moment. Therefore, it is natural to expect that the restriction on the tail of $\bm \pi$ is only technical.
\end{remark}

\begin{remark}
Theorem~\ref{th:profile} and Corollary~\ref{coro:length} are still valid when we condition the graphs to be simple, in which case the configuration model is the uniform random graph with a given degree sequence. This is a classical consequence of the fact that the probability to be simple for a configuration model with a given asymptotic degree distribution is uniformly bounded away from zero under our assumptions. See for instance van der Hofstad's book \cite{vdH}.
\end{remark}

\section{Examples}\label{sec:examples}
In this section we provide explicit formulations of Proposition \ref{prop:law} and Theorem \ref{th:profile} for particular choices of the initial probability distribution $\bm \pi$.

\subsection{Poisson distribution}
Since the Erd\H{o}s-Rényi model on $N$ vertices with probability of connection $c/N$ is contiguous to the configuration model on $N$ vertices with sequence of degree $D_1^{(N)}, \ldots, D_N^{(N)}$ that are i.i.d. with Poisson law of parameter $c$, we can recover the result of Enriquez, Faraud and Ménard \cite{EnriquezFaraudMenard20}. Indeed, in the Erd\H{o}s-Rényi case, after having explored a proportion $\alpha$ of vertices, the graph induced by the unexplored vertices is an Erd\H{o}s-Rényi random graph with $(1-\alpha)N$ vertices and parameter $c/N$, hence its asymptotic degree distribution is Poisson with parameter $(1-\alpha)c$. This is in accordance with our Proposition \ref{prop:law} since in that case, denoting $f(s)=\exp( c(s-1))$ the generating series of the Poisson law with parameter $c$,
\begin{align*}
g(\alpha,s) 
&= \frac{1}{1-\alpha} f \left(  f^{-1}(1-\alpha) - (1-s) \frac{f'(f^{-1}(1-\alpha))}{f'(1)} \right) \\
&= \frac{1}{1-\alpha} \exp \left(  c\left( f^{-1}(1-\alpha) - (1-s) \frac{f'(f^{-1}(1-\alpha))}{f'(1)}  -  1 \right)    \right) \\
&= \frac{1}{1-\alpha} \exp \left(  c\left( 1 + \frac{\log(1-\alpha)}{c} - (1-s)  \frac{cf(f^{-1}(1-\alpha))}{c}   -  1 \right)    \right) \\
&= \frac{1}{1-\alpha} \exp \left(  c\left( 1 + \frac{\log(1-\alpha)}{c} - (1-s)  (1-\alpha)   -  1 \right)    \right) \\
&= \exp \left( c (1-\alpha)(s-1)  \right).
\end{align*}
Using the formulas of Theorem \ref{th:profile}, we obtain the same equations as in \cite{EnriquezFaraudMenard20} for the limiting profile of the DFS spanning tree.

\begin{figure}[ht!]
\centering
\begin{tabular}{ccc}
\includegraphics[scale=0.22]{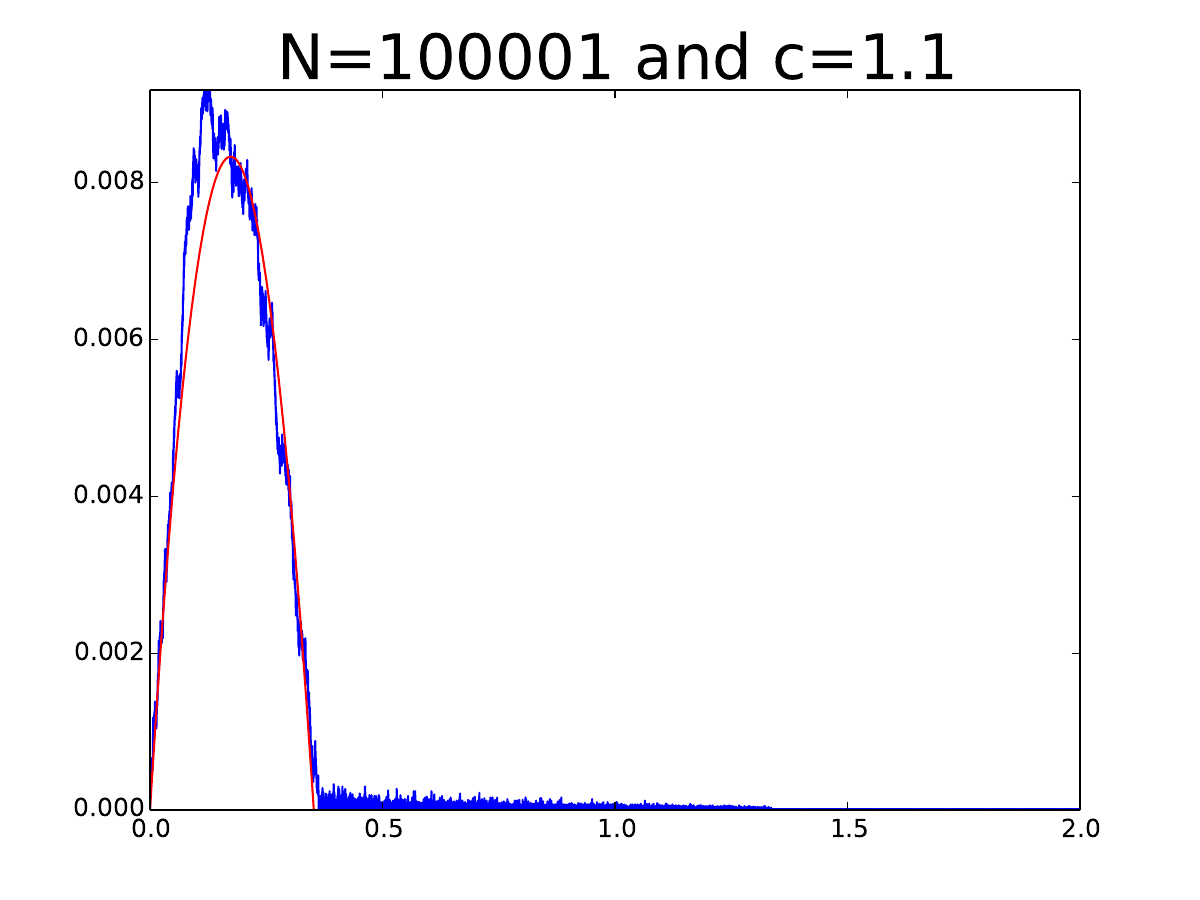}
&
\includegraphics[scale=0.22]{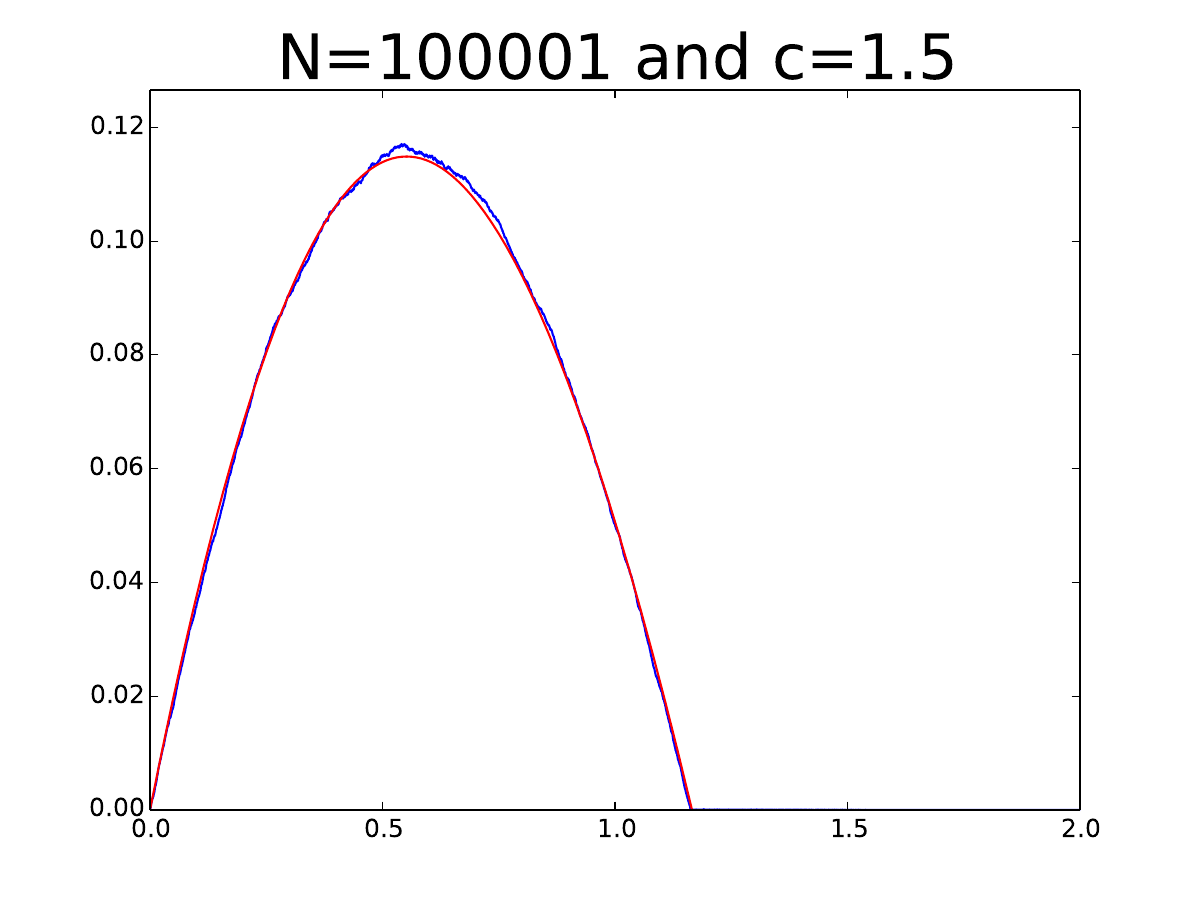}
&
\includegraphics[scale=0.22]{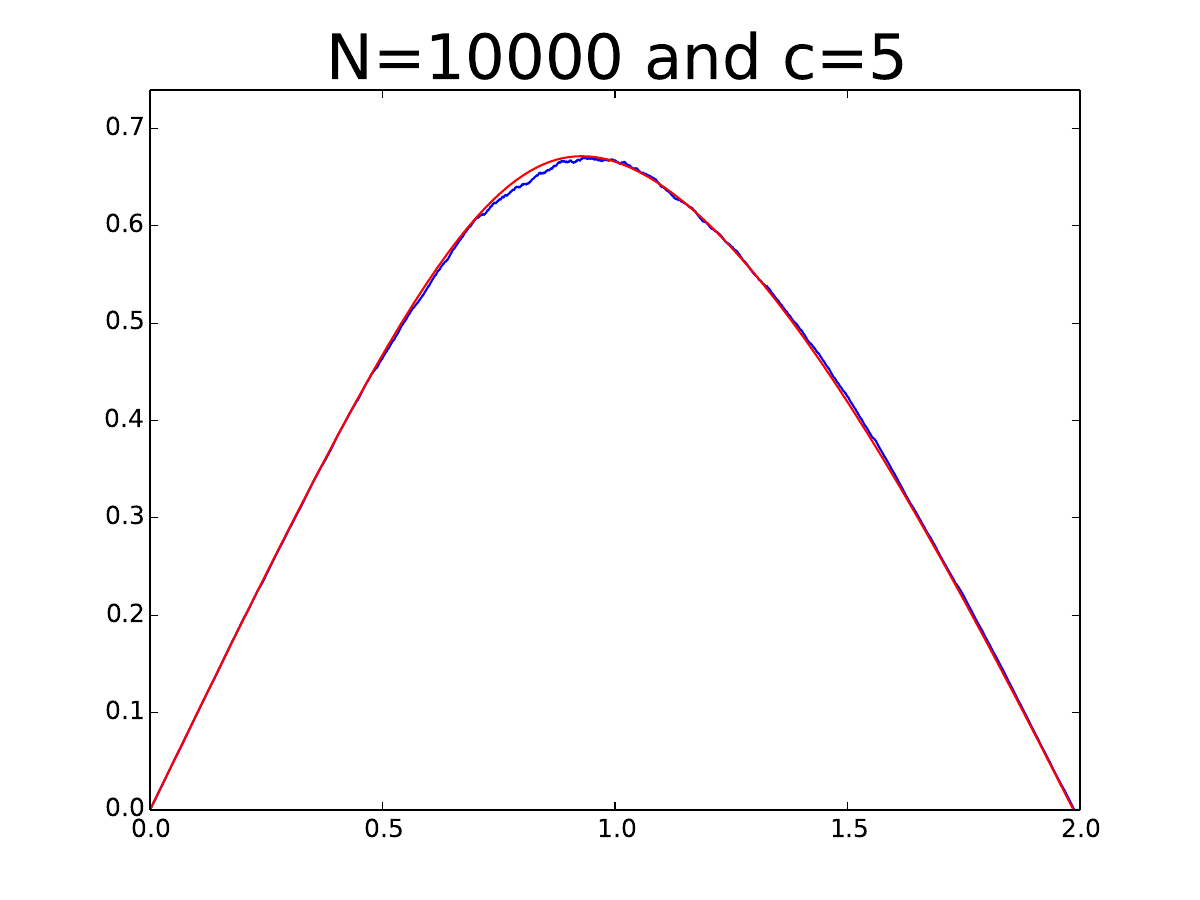}
\end{tabular}
\caption{\label{fig:Poisson}Simulations of $(X_{\lceil tN\rceil }/N)_{t\in [0,2]}$ (blue) and the limiting shape (red) for various values of $N$ and $c$. Notice that when $c$ is close to $1$, we have to take $N$ very large for the walk to be close to its limit.}
\end{figure}

\subsection{$d$-Regular and Binomials distributions}
Let $d \geq 3$. Since the results of Proposition \ref{prop:law} and Theorem \ref{th:profile} hold with probability tending to $1$, we can obtain results on $d$-regular uniform random graphs by applying them to the contiguous model which consists in choosing $\bm \pi = \delta_d$ and conditioning the graphs to be simple. By Proposition \ref{prop:law}, the degree distribution $\bm \pi_\alpha$ has generating function
\begin{align}
g(\alpha,s) 
&= \frac{1}{1-\alpha}  \left(  (1-\alpha)^{1/d} - (1-s) \frac{d(1-\alpha)^{(d-1)/d}}{d} \right)^d \nonumber \\
&= \left( 1 + (s-1)(1-\alpha)^{ \frac{d-2}{d} }  \right)^d \label{eq:gendreg}.
\end{align}
Hence, $\bm \pi_\alpha$ is a binomial distribution $\mathrm{Bin}\left(d, (1-\alpha)^{ \frac{d-2}{d} }  \right)$. From \eqref{eq:gendreg}, we get $\hat{g}(\alpha,s)= (1+(s-1)(1-\alpha)^{(d-2)/d})^{d-1}$. Solving the equation $1-\rho = \hat{g}(\alpha,1-\rho)$ in $\alpha$ gives:
\[  \alpha(\rho) = 1 - \left( \frac{1 - (1-\rho)^{ \frac{1}{d-1} }}{\rho}  \right)^{ \frac{d}{d-2} }.  \]
From this, we deduce a parametrization of the limiting profile in terms of hypergeometric functions. In particular, the height of the limiting DFS spanning tree is given by
\[  H_{\max}(d) = 1 -  \int_0^1 \left( \frac{1 - x^{ \frac{1}{d-1} }}{1-x}  \right)^{ \frac{d}{d-2} } \mathrm{d}x. \]

When $\bm \pi$ has a binomial distribution with parameters $d$ and $p$, $\bm \pi_\alpha$ is also a binomial distribution. 
\[ \bm \pi_\alpha = \mathrm{Bin} \left(   d, p ( 1 - \alpha)^{\frac{d-2}{d}} \right). \]

\begin{figure}[ht!]
\centering
\begin{tabular}{ccc}
\includegraphics[scale=0.22]{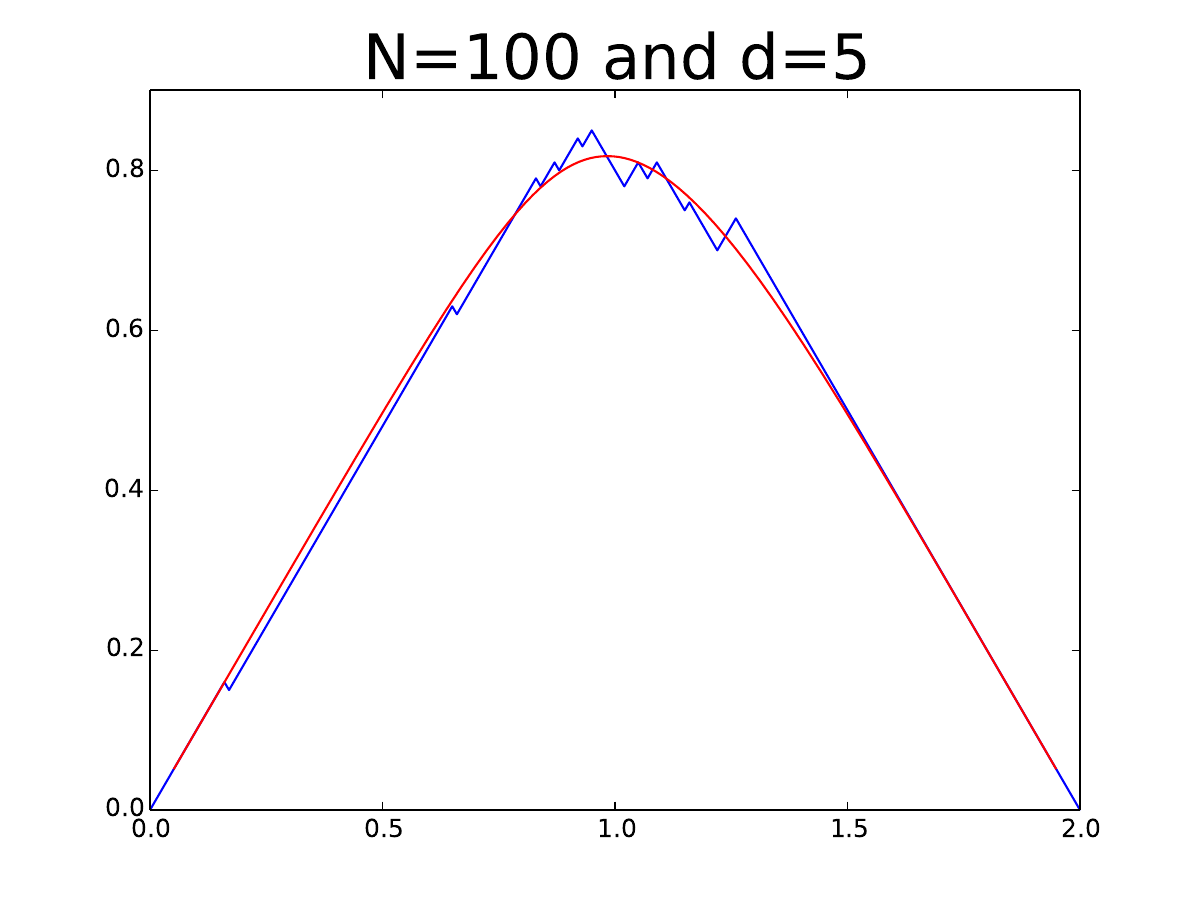}
&
\includegraphics[scale=0.22]{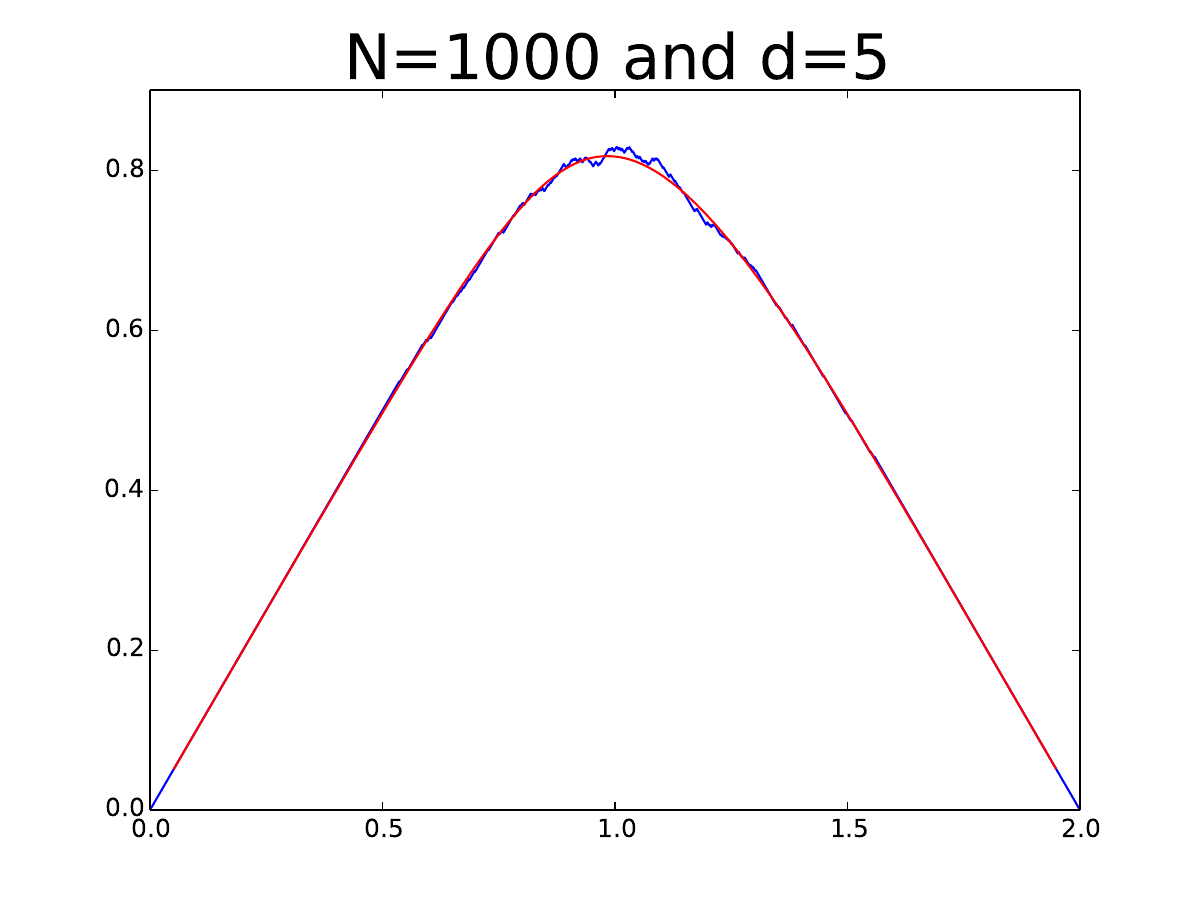}
&
\includegraphics[scale=0.22]{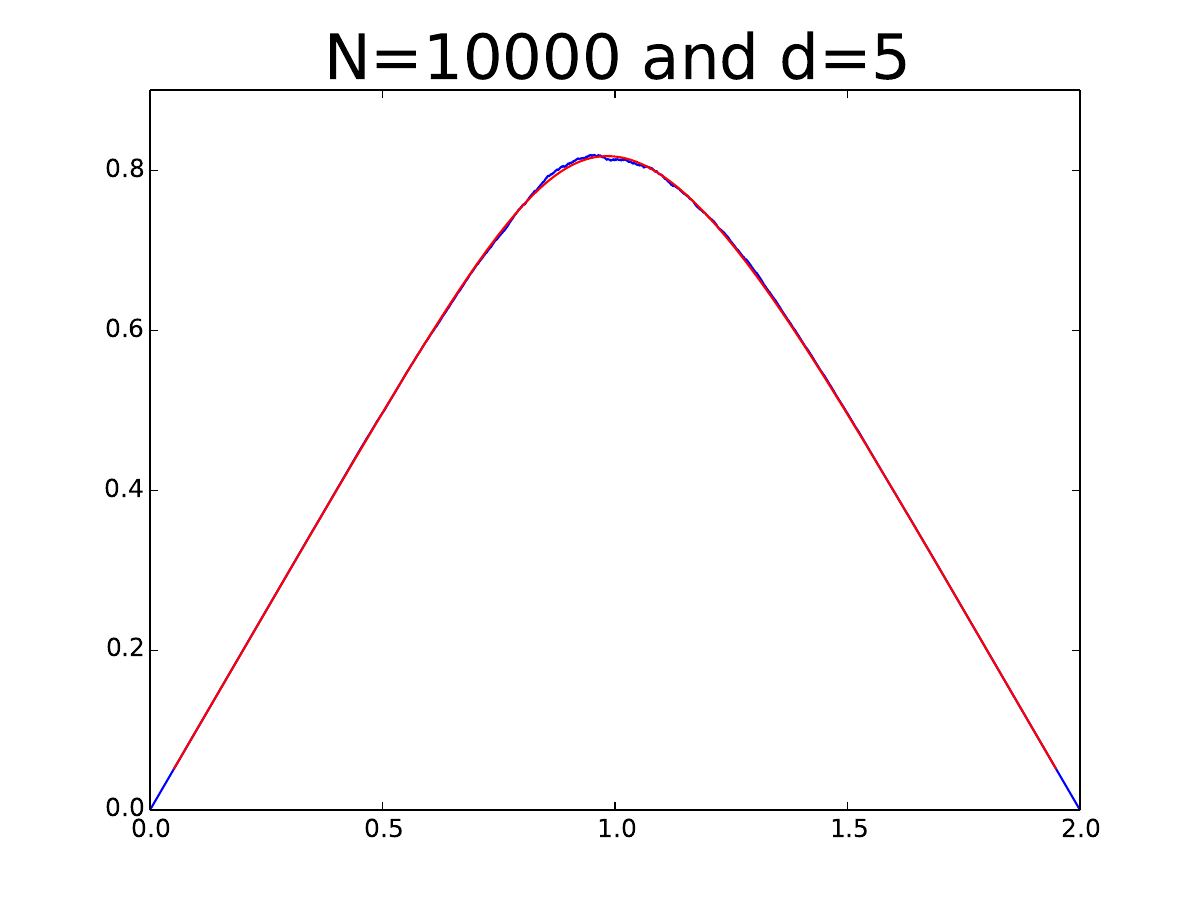}
\end{tabular}
\caption{\label{fig:dreg}Simulations of $(X_{\lceil tN\rceil }/N)_{t\in [0,2]}$ (blue) and the limiting shape (red) for $5$-regular graphs of various sizes.}
\end{figure}

\subsection{Geometric distribution}
Let $p>0$ and suppose that the initial distribution $\bm \pi$ is a geometric distribution starting at $0$ with parameter $p$. The generating series of $\bm \pi$ is $f(s)= \frac{p}{1-(1-p)s}$. We assume $p < 2/3$ so that the configuration model with asymptotic degree distribution $\bm \pi$ has a giant component. Then, by Proposition \ref{prop:law}, the distribution $\bm \pi_\alpha$ has generating series
\[ g(\alpha,s) = \frac{p(\alpha)}{1-(1-p(\alpha))s},  \]
where $p(\alpha)= \frac{p}{p+(1-p)(1-\alpha)^3}$. Hence, $\bm \pi_\alpha$ is a geometric distribution that starts at $0$ with parameter $p(\alpha)$. The generating series of $\hat{\bm \pi}_\alpha$ is $\hat{g}(\alpha,s)= \left(  \frac{p(\alpha)}{1-(1-p(\alpha))s} \right)^2$. Therefore, the solution in $\alpha$ of $1-\rho = \hat{g}(\alpha,1-\rho)$ is
\[  \alpha(\rho) = 1 - \left(\frac{p}{1-p}\right)^{1/3} \left( \frac{1}{1-\rho + \sqrt{1-\rho} }  \right)^{1/3}.  \]
In particular, the height of the limiting DFS spanning tree is given by:
\[  H_{\max}(p) = \rho_{\bm \pi} -  \left(\frac{p}{1-p}\right)^{1/3} \int_0^{\rho_{\bm \pi}} \left(\frac{1}{x + \sqrt{x}} \right)^{1/3} \mathrm{d}x, \]
where $\rho_{\bm \pi}$ is given by:
\[  \rho_{\bm \pi} = \frac{1}{2} \left( \frac{1 - 3p}{1-p} + \sqrt{\frac{1+3p}{1-p}} \right).  \]

\begin{figure}[ht!]
\centering
\begin{tabular}{ccc}
\includegraphics[scale=0.22]{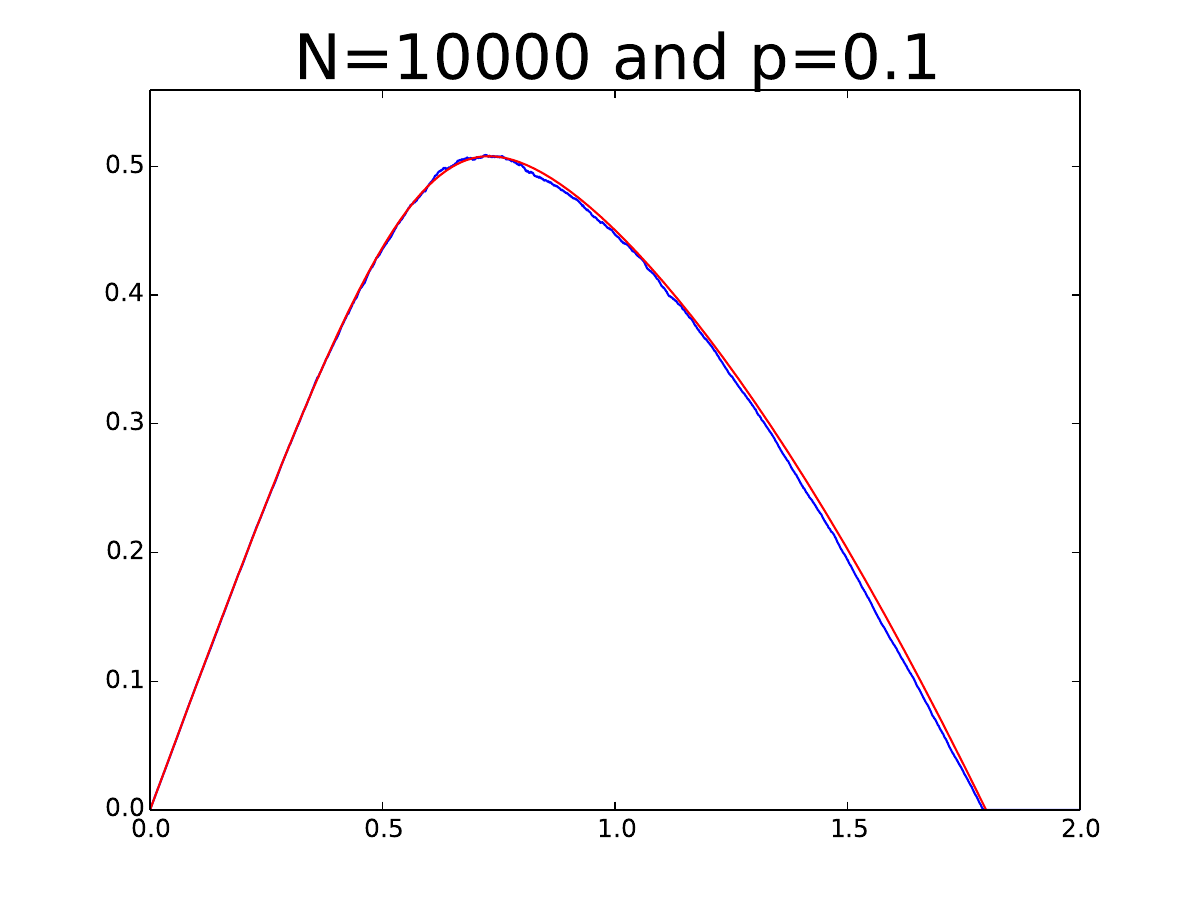}
&
\includegraphics[scale=0.22]{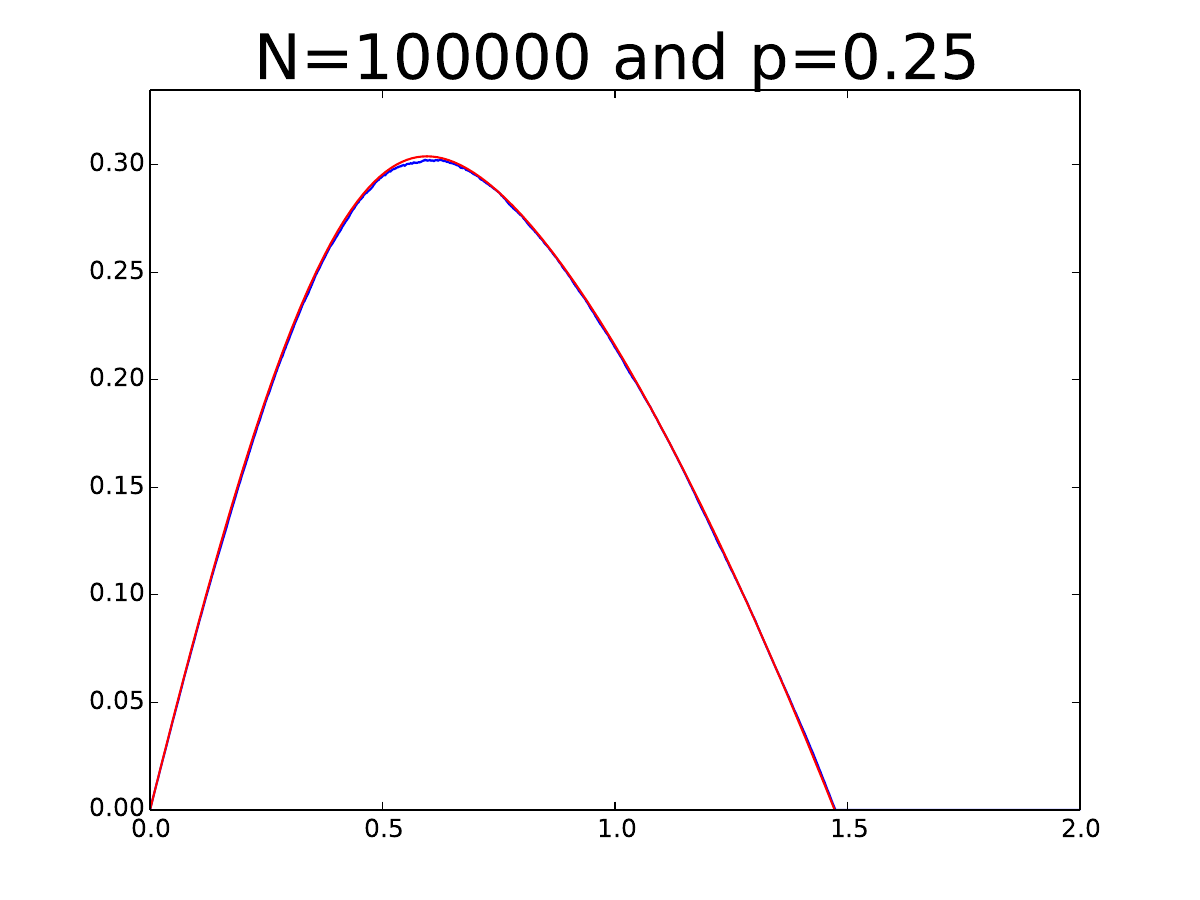}
&
\includegraphics[scale=0.22]{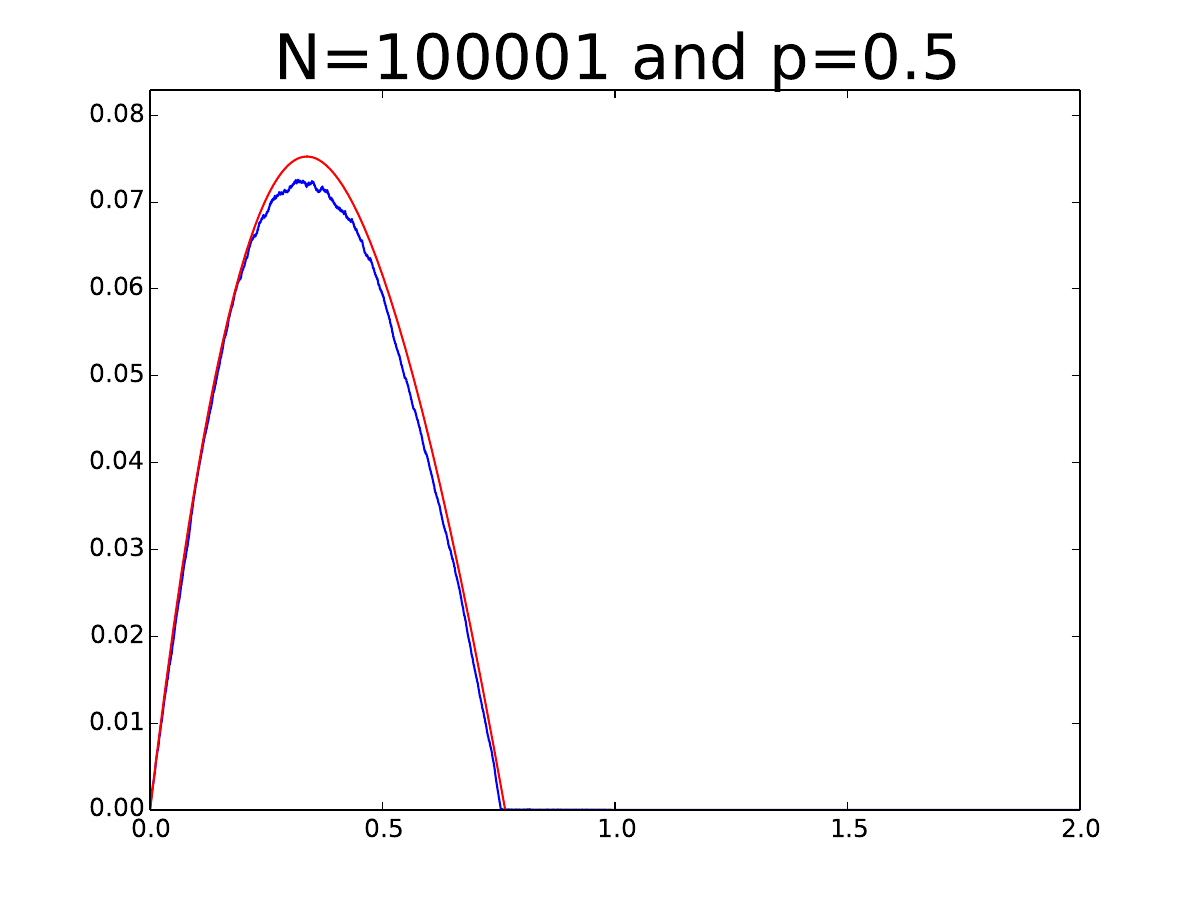}
\end{tabular}
\caption{\label{fig:geom}Simulations of $(X_{\lceil tN\rceil }/N)_{t\in [0,2]}$ (blue) and the limiting shape (red) for random graphs with geometric degrees with various perimeters.}
\end{figure}

\subsection{Heavy tailed distribution}
When $\bm \pi$ is a power law distribution of parameter $\gamma>2$, that is when $\bm \pi(\{k, k+1, \ldots \}) \sim C/k^\gamma$ for a constant $C$, only the first $\lfloor \gamma \rfloor$ moments of $\bm \pi$ are finite. Let $\alpha \in (0,\alpha_c)$. Then, for all $n \geq 0$, the $n$-th factorial moment of $\bm \pi_\alpha$ is equal to
\begin{align*} 
\bm \pi_\alpha(x^n) 
&= \frac{\partial^n}{\partial s^n} \Bigg|_{s=1} g(\alpha,s) \\
&= \left( \frac{f_{\bm \pi}' \left(  f_{\bm \pi}^{-1}(1-\alpha) \right)}{f_{\bm \pi}'(1)} \right)^n  \frac{f^{(n)}\left( f^{-1}(1-\alpha) \right)}{1-\alpha} .
\end{align*}
Therefore, after  visiting a proportion $\varepsilon N$ of the vertices in the DFS, the asymptotic distribution of the degrees of the graph induced by the unexplored vertices is not a power law and has moments of all orders. This remarkable phenomenon could be explained by the fact that vertices of high degree are visited in a microscopic time. We believe that a precise study of this case could be of independent interest.

\section{Constructing while exploring} \label{sec:ConstrDFS}
Let $(\mathbf d^{(N)})_{N\geq 1}$ be a sequence of degree sequences of increasing length satisfying the assumptions of Proposition \ref{prop:law}. For a fixed $N \geq 1$, we use the sequence $\mathbf d^{(N)} = (d_1^{(N)}, \ldots, d_N^{(N)})$ to construct a configuration model $\mathscr{C}(\mathbf{d}^{(N)})$ with vertex set $\mathrm{V}_N= \{1, \ldots,N\}$. More precisely, we simultaneously build the graph and its DFS exploration. This will be done in a similar way as for the DFS defined in Section \ref{sec:defnDFS}, while revealing as little information about the unexplored part of the graph as possible. For every step $n$ we consider the following objects, defined by induction.
\begin{itemize}
\item $A_n$, the active vertices, is an ordered list of pairs $(v,\mathbf m_v)$ where $v$ is a vertex of $\mathrm{V}_N$ and $\mathbf m_v$ is the list of vertices corresponding to the vertices that will be \textbf{m}atched to $v$ during the rest of the exploration.
\item $S_n$, the sleeping vertices, is a subset of $\mathrm{V}_N$. This subset will never contain a vertex of $A_n$.
\item $R_n$, the retired vertices, is another subset of $\mathrm{V}_N$ composed of all the vertices that are neither in $A_n$ nor $S_n$.
\end{itemize}
At time $n=0$, choose a vertex $v$ uniformly at random and pair each of its $d_v^{(N)}$ half edges to a half edge of the graph. This gives an unordered set of vertices that will be matched to $v$ at some point of the exploration. We denote by $\mathbf m_v$ this set with a uniform order. Set:
\[
\begin{cases}
A_0 &= \left((v,\mathbf m_v) \right), \\
S_0 &= \mathrm{V}_N \setminus \{v\}, \\
R_0 &= \emptyset.
\end{cases}
\]
Suppose that $A_n$, $S_n$ and $R_n$ have already been constructed. Three cases are possible.
\begin{enumerate}
\item If $A_n = \emptyset$, the algorithm has just finished exploring and building a connected component of $\mathscr{C}( \bm d^{(N)} )$. In that case, we pick a vertex $v_{n+1}$ uniformly at random from $S_n$ and we pair each of its $d_{v_{n+1}}^{(N)}$ half edges to a uniform half edge belonging to a vertex of $S_n$. We denote by $\mathbf m_{v_{n+1}}$ the set of these paired vertices which are different from $v_{n+1}$ (corresponding to loops in the graph), ordered uniformly and set:
\[
\begin{cases}
A_{n+1} &= \left(v_{n+1},\mathbf m_{v_{n+1}} \right), \\
S_{n+1} &= S_n \setminus \{v_{n+1}\}, \\
R_{n+1} &= R_n.
\end{cases}
\]

\item If $A_n \neq \emptyset$ and if its last element $(u,\mathbf m_u)$ is such that $\mathbf m_u = \emptyset$, the DFS backtracks and we set:
\[
\begin{cases}
A_{n+1} &= A_n - (u,\mathbf m_u), \\
S_{n+1} &= S_n,  \\
R_{n+1} &= R_n \cup \{u\}.
\end{cases}
\]

\item If $A_n \neq \emptyset$ and if its last element $(u,\mathbf m_u)$ is such that $\mathbf m_u \neq \emptyset$, the algorithm goes to the first vertex of $\mathbf m_u$, say $v_{n+1}$. By construction, this vertex always belongs to $S_n$.
We first update $A_n$ into $A'_n$ by removing each occurrence of $v_{n+1}$ in the lists $\mathbf m_x$ for $x \in A_n$.
The half edges of $v_{n+1}$ that have not been matched up to now are uniformly matched with half edges of $S_n$ that have not yet been matched. We order the set of corresponding vertices and denote $\mathbf m_{v_{n+1}}$ this list. We finally set
\[
\begin{cases}
A_{n+1} &= A'_n + (v_{n+1},\mathbf m _{v_{n+1}}) \\
S_{n+1} &= S_n \setminus \{v_{n+1}\},  \\
R_{n+1} &= R_n.
\end{cases}
\]

\end{enumerate}
\bigskip

Since each matching of half-edges in the algorithm is uniform, it indeed constructs a random graph $\mathscr{C}(\bm d^{(N)})$. Moreover, as advertised at the end of Section \ref{sec:defnDFS}, this algorithm simultaneously constructs the DFS on this random graph as each of the three cases are in correspondence to the same three cases in the definition of the DFS given in Section \ref{sec:defnDFS}.

From this construction, it is clear that for every $n$, the graph induced by $S_n$ in the whole graph is a configuration model conditionally on the induced degree sequence. Moreover, for each vertex $v$ of $S_n$, its degree in this induced graph is given by its initial degree $d_v^{(N)}$ minus the number of times that $v$ appears in the lists $\mathbf m_x$ for $x \in A_n$. 

\bigskip
In order to prove Theorem 1, we will first analyse the part of the algorithm corresponding to the increasing part of the limiting profile. This has the same law as the increasing part of the process $(X_n)_{0 \leq n \leq 2N} = (|A_n|)_{0 \leq n \leq 2N}$. During this first phase, at each time, the graph induced by the sleeping vertices, which we will call the remaining graph, is a supercritical configuration model. We will see in Section \ref{sec:pseudo} that there is a sequence of random times where the DFS discovers a vertex belonging to what will turn out to be the giant component of the remaining graph. We will call these times ladder times and study in detail the law of the remaining graph at these times in Section \ref{sec:graphs}.

\subsection{Ladder times} \label{sec:pseudo}
Fix $\delta \in (0,1) $. Let $T_0 = 0$ and define, for $k \in \{0 , \ldots , K\}$,
\begin{equation*}
T_{k+1} := \min \left\{ i > T_k, \, X_i = k+1 \, \, \text{and} \, \, \forall i \leq j \leq i + N^\delta, \, X_j \geq k+1 \right\},
\end{equation*}
where $K$ is the last index for which this definition makes sense (i.e. the set for which the min is taken is not empty).  Of course, this sequence of times will only be useful to analyze the DFS on $\mathscr C (\mathbf d^{(N)})$ when $K$ is of macroscopic order, which is indeed the case with high probability under the assumptions of Proposition \ref{prop:law}.

For all $k \in \{0 , \ldots , K\}$, let $\mathscr{S}_k$ be the graph induced by the vertices of $S_{T_k-1}$ in the graph constructed by the algorithm of the previous section, with the convention that $S_{-1} = \emptyset$. We also denote by $v_k$ the last vertex of $A_{T_k}$. The graphs $\mathscr S_k$ and $S_{T_k}$ have the same vertex set except for $v_k$ which belongs to $\mathscr S_k$ but not to $S_{T_k}$. See Figure \ref{fig:ladder} for an illustration of these definitions. We chose to emphasize $\mathscr{S}_k$ because the structural changes between two such consecutive graphs will be easier to track.
\begin{figure}[!h]
\centering
\includegraphics[scale=0.8]{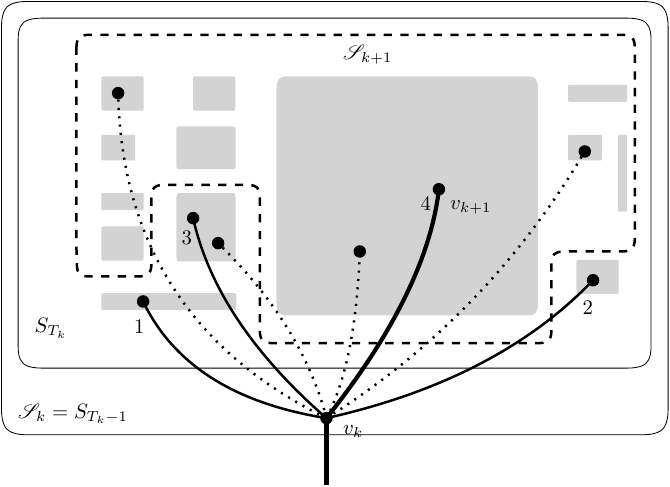}
\caption{\label{fig:ladder} Structure of the remaining graph at a ladder time. The first half edges of $v_k$ are numbered according to their matching order during the construction. Here, the last matched half edge is in bold and connects $v_k$ to $v_{k+1}$. The remaing half edges of $v_k$ are represented by dotted lines and matched to unexplored vertices.}
\end{figure}

Fix $k < K$. From the definition of the times $T_k$ and $T_{k+1}$, we can deduce that $v_{k+1}$ and $v_k$ are neighbors in $\mathscr S_k$. Between the times $n = T_k$ and $n = T_{k+1}$ the process $X_n = |A_n|$ stays above $k$ and is equal to $k$ at time $T_{k+1} - 1$. Each excursion of $X_n$ strictly above $k$ between $T_k$ and $T_{k+1} - 1$ corresponds to the exploration of a different connected component of $\mathscr{S}_k \setminus \{v_k \}$ and we have
\begin{equation*} 
T_{k+1} - T_k = 1 + 2 \times (\text{ number of vertices in ${\mathscr{S}_k} \setminus \mathscr{S}_{k+1}$ } -1).
\end{equation*}
In addition, the definition of the ladder times implies that these connected components have sizes smaller than $N^\delta$.

\bigskip

For every $n \in \{0 , \ldots, 2N\}$, let $D_{n}^{(N)}$ be the degree of a uniform vertex in the graph induced by $S_{n}$.
For every $\varepsilon >0$, we define
\begin{equation*}
n_\epsilon = n_{\varepsilon}^{(N)} = \sup \left\{ n \in \llbracket 0, 2N \rrbracket: \, \forall m \in \llbracket 0,n \rrbracket, \, \frac{\E[D_{n}^{(N)}( D_{n}^{(N)}-1)]}{\E[D_{n}^{(N)}]} > 1 + \varepsilon \right\}.
\end{equation*}
For $n < n_\varepsilon$, the subgraphs induced by $S_n$ are all supercritical. For $0 < \delta < 1/2$, let $\mathbf G_\varepsilon = \mathbf{G}_{\varepsilon}^{(N)}(\delta)$ be the event that, for all $n < n_\varepsilon$,
\begin{itemize}
\item there is at least one connected component with size greater than $N^{1-\delta}$ in the graph induced by $S_n$;
\item there is no connected component of size between $N^\delta$ and $N^{1 - \delta}$ in the graph induced by $S_n$.
\end{itemize}
Under the assumptions of Proposition \ref{prop:law} we have, for every $\lambda >0$,
\begin{equation}
\label{lemma: estimate good event}
\mathbb P \big( \mathbf{G}_{\varepsilon} \big) = 1 -  \mathcal O (N^{-\lambda}).
\end{equation}
See for example the last bound page 82 of Bordenave \cite{BordenaveCours}.

\bigskip

The event $\mathbf G_\varepsilon$ will be instrumental in the analysis of the DFS and the times $T_k$ because, on this event, if $T_k < n_\varepsilon$, then the graph $S_{T_k} = {\mathscr S}_k \setminus \{v_k\}$ has a connected component of size larger than $N^{1-\delta}$ and, in $\mathscr S_k$, the vertex $v_k$ has a neighbor in this giant component. Indeed, if every neighbor of $v_k$ in $S_{T_k}$ belonged to a small component, the size of the connected component of $v_k$ in $\mathscr S_k$ would be at most $N^{1/\gamma} \, N^\delta \ll N^{1-\delta}$. On the other hand, we know that this component has size larger than $N^\delta$ meaning that, on $\mathbf G_\varepsilon$, it is in fact larger than $N^{1-\delta}$ leading to a contradiction.
By induction, this means that on $\mathbf G_\varepsilon$ and if $T_k < n_\varepsilon$, then $k<K$.

Let us finally set
\[ K_\varepsilon := \sup \{ k \in \llbracket 0, K \rrbracket, \, T_k < n_\varepsilon   \}, \]
and note that, thanks to \eqref{lemma: estimate good event}, $K_\varepsilon < K$ with probability $1 - \mathcal{O}(N^{-\lambda})$.

\subsection{Analysis of the graphs $\mathscr S_k$} \label{sec:graphs}

Let $N_i(k)$ be the number of vertices of degree $i$ in $\mathscr{S}_k$. The graph $\mathscr{S}_k$ has the law of a configuration model with vertex degrees given by the sequence $(N_i(k))_{i \geq 0}$. Denote by $V_i(S)$ the number of vertices with degree $i$ in the graph $S$. Moreover if $H$ is a subgraph of $S$, $S\setminus H$ stands for the subgraph of $S$ induced by its vertices that do not belong to $H$. Recalling that $\mathbf m_{v_k}$ denotes the list of neighbors of $v_k$ in $\mathscr{S}_k$ (self-loops not included), the evolution of $N_i$ is given by:
\begin{align}
N_i(k+1)-N_i(k) = 
& - V_i({\mathscr{S}_k} \setminus \mathscr{S}_{k+1}) \label{eq2} \\
& + \sum_{v \,\in  \, \mathbf m_{v_k} \cap \mathscr S_{k+1}}
\left( - \mathbf{1}_{ \deg_{\mathscr{S}_k}(v) =i } + \mathbf{1}_{ \deg_{\mathscr{S}_k}(v)= i+ o_v } \right) \label{eq3},                
\end{align}
where $o_v$ is the number of occurrences of $v$ in $\mathbf{m}_{v_k}$. Indeed, the first contribution corresponds to the complete removal of vertices belonging to $\mathscr S_k$ but not to $\mathscr S_{k+1}$. The second contribution corresponds to  edges of $\mathscr{S}_k$ connecting $v_k$ to vertices of $\mathscr{S}_{k+1}$, taking into account eventual multiple edges. Figure \ref{fig:ladder} gives an illustration of this situation. In this figure, the contribution \eqref{eq2} comes from the connected components of the vertices attaches to the half edges of $v_k$ numbered $1$, $2$ and $3$. The contribution \eqref{eq3} comes from $v_{k+1}$ and the vertices matched to dotted half edges.

\bigskip

A fundamental step in understanding the behaviour of the exploration process is to identify the asymptotic behaviour of the variables $T_k$ and $N_i(k)$ for large $N$. This is the object of Theorem \ref{theo: fluidlimits}. To state this, we first introduce some technical notation.

Let $(z_i)_{i \geq 0} \in \mathbb{R}^{ \mathbb{Z}_+ }$ be such that $\sum_{ i \geq 0} z_i  \leq 1$ and $\sum_{k \geq 0} i z_i < \infty$. 
For any $i \geq 0 $ let $\hat{z}_i = (i+1)z_i / \sum_j j z_j$ and define:
\begin{equation}
\begin{cases} g_{(z_i)_{i \geq 0}}(s) &=  \sum\limits_{i \geq 0} \frac{z_i}{\sum_{l\geq 0} z_l} s^i \\
\hat{g}_{ (z_i)_{i \geq 0}}(s) &= \sum\limits_{i \geq 0} \hat{z}_i s^i = \frac{g'_{(z_i)_{i \geq 0} }(s)}{g'_{(z_i)_{i \geq 0} } (1)}
\end{cases}
\label{eq: DefnGenSer}
\end{equation}
respectively the generating series associated to $(z_k)_{k \geq 0}$ and its sized-biased version.
Let also $\rho_{(z_i)_{i\geq 0}}$ be the largest solution in $[0,1]$ of 
\begin{equation} \label{eq:rho}
1-s = \hat{g}_{(z_i)_{i \geq 0} } (1-s).
\end{equation}
\begin{remark}
Since $\hat{g}$ is the generating function of a probability distribution on the integers, it is convex on $[0,1]$. Therefore, Equation \eqref{eq:rho} has a positive solution in $(0,1]$ if and only if $\hat{g}'(1) > 1$, which is equivalent to $\frac{\sum_{l \geq 1} (l-1)l z_l}{\sum_{l \geq 1} l z_l} > 1$.
\end{remark}

We also define the following functions:
\begin{align}
f(z_0,z_1, \ldots) &= \frac{2- \rho_{(z_i)_{i \geq 0}} }{\rho_{(z_i)_{i \geq 0}}} \label{eq:deff} \\
f_i(z_0,z_1, \ldots) &= - \frac{1}{\rho_{(z_j)_{j \geq 0}}} \frac{i z_{i}}{\sum_{j \geq 0} j z_j} \notag \\
& \quad + \frac{1}{\rho_{(z_j)_{j \geq 0}}} \left( 1 - \frac{\sum_{j \geq 0} (j-1)j z_j}{ \sum_{n \geq 0} j z_j }  \right) \left( \frac{i z_{i}}{\sum_{j \geq 0} j z_j} - \frac{(i+1)z_{i+1}}{\sum_{j \geq 0} j z_j} \right) \label{eq:deff_i}.
\end{align}

The asymptotic behaviour of the variables $T_k$ and $N_i(k)$ will be driven by the solution of an infinite system of differential equations whose existence is provided by the following lemma, whose proof is postponed to Section \ref{sec: EqDiff}. Actually, we exhibit an explicit solution of another infinite system of differential equations $(S')$ which is related to the following system \eqref{eq: EqDiff} by some time change.

\begin{lemma}\label{lemma: EqDiff}
Let $\bm \pi = (\bm \pi_i)_{i \geq 0} \in [0,1]^\mathbb{N}$ such that $\sum_{i \geq 0} \bm \pi_i = 1$. Then, the following system of differential equations:
\begin{equation}\tag{S}\label{eq: EqDiff}
\left\{ 
\begin{array}{lcl}
\frac{\mathrm{d}z_i}{\mathrm{d}t} & = & f_i(z_0, z_1, \ldots ); \\
z_i(0) & = & \bm \pi_i.
\end{array}
\right.
\end{equation}
admits a solution $(z_i^*)_{i \geq 0}$ which is well defined on $[0,t_{\max})$ for some $t_{\max}>0$ and whose derivatives $dz_i^*/dt$ are all Lipschitz.

\end{lemma}
We are now ready to state the main result of this section.
\begin{theorem}\label{theo: fluidlimits}
For all $t \in [0,1]$ such that  $\lfloor t N \rfloor \leq K_\varepsilon$, the following convergences in probability hold: 
\begin{align*}
\forall i \geq 1, \quad \frac{N_i(\lfloor tN \rfloor)}{N} &\overset{\mathbb{P}}{\underset{N \rightarrow +\infty}{\longrightarrow}}  z^*_i \left( t   \right),
\end{align*}
Moreover, 
\begin{align*}
\frac{T_{\lfloor tN \rfloor}}{N} &\overset{\mathbb{P}}{\underset{N \rightarrow +\infty}{\longrightarrow}} z^*\left(t \right).
\end{align*}
\end{theorem}
\begin{remark} By Theorem \ref{theo: fluidlimits}, we deduce that the system $(S)$ has a unique solution among sequences of functions with Lipschitz derivatives.
\end{remark}

The proof of Theorem \ref{theo: fluidlimits} is crucially based on the following Lemma which identifies the trends of the quantities $T_k$ and $N_i(k)$.

\begin{lemma}\label{lemma:trends}
\begin{enumerate}
\item There exists $0 < \beta < 1/2$ such that with high probability for all $k \leq K_\varepsilon$, 
\[ |T_{k+1} - T_k | \leq N^\beta \text{   and for all   } k \geq 0,\, |N_i({k+1}) - N_i(k) | \leq N^\beta. \]
\item We denote by $(\mathcal{F}_k )_{k\geq 0}$ the canonical filtration associated to the sequence $\left( (N_i(k))_{i \geq 0} \right)_{k \geq 0} $. There exists $\lambda> 0$ such that for every $k\leq K_\varepsilon$, 
\begin{align*} 
\E[T_{k+1} - T_k \, | \, \mathcal{F}_k] &= f \left( \frac{N_0(k)}{N} , \frac{N_1(k)}{N} , \ldots  \right) + O \left( N^{-\lambda} \right),  \\
 \E[ N_i({k+1}) - N_i(k) \, | \, \mathcal{F}_k] &= f_i \left( \frac{N_0(k)}{N} , \frac{N_1(k)}{N} , \ldots  \right) + O \left( N^{-\lambda} \right).
\end{align*}
\end{enumerate}
\end{lemma}

In the following, we first focus on the proof of Lemma \ref{lemma:trends} and postpone the proof of Theorem \ref{theo: fluidlimits} at the end of the section.

\begin{proof}[Proof of Lemma \ref{lemma:trends}.]
The first point is a consequence of Equation \eqref{lemma: estimate good event} with $\delta < 1/2 - 1/\gamma$. Indeed on the event $\mathbf G_\varepsilon$ the vertices $v_k$ have degree at most $N^{1/\gamma}$ and therefore $T_{k+1} - T_k \leq 1 + 2 N^{1/\gamma} N^\delta \ll N^\beta$ for some $\beta < 1/2$. Since $|N_i({k+1}) - N_i(k)| \leq (T_{k+1} - T_k)/2$ the second inequality is trivial.

In order to establish the second point, we need to analyse the structure of $\mathscr{S}_k$ and the contributions \eqref{eq2} and \eqref{eq3}. To this end, we will study the random variable $\mathfrak{e}_k$ that counts the number of excursions strictly above $k$ of the walker $(X_n)$ coding the DFS between the times $T_k$ and $T_{k+1} -1$ (in Figure \ref{fig:ladder}, $\mathfrak{e}_k = 3$). In particular, the expectation of $\mathfrak{e}_k $ conditionally on $\mathcal F_k$ is well defined on the event $\mathbf G_\varepsilon$.

If we disconnect the edges joining the $\geom_k$ first children of $v_k$ in the tree constructed by the DFS, the remaining connected components in $\mathscr S_k$ of these children have size smaller than $N^\delta$. 
This motivates the following notation:
\begin{itemize}
\item for every $i \geq 0$, let $\mathbf{Ext}_{i}^k$ (resp. $\mathbf{Surv}_{i}^k$) be the set of half-edges $e \in \mathscr{S}_k$ connected to a vertex $w$ of degree $i$ (in $\mathscr{S}_k$) such that the connected component of $w$ after removing this half-edge has size smaller than $N^\delta$ (resp. larger than $N^\delta$); 
\item let $\mathbf{Ext}^k$ (resp. $\mathbf{Surv}^k$) be the set of half-edges $e \in \mathscr{S}_k$ connected to a vertex $w$ such that the connected component of $w$ after removing this half-edge has size smaller than $N^\delta$ (resp. larger than $N^\delta$). Note that $\mathbf{Ext}^k = \sqcup_{j \geq 0} \mathbf{Ext}_{j}^k$ and $\mathbf{Surv}^k = \sqcup_{j \geq 0} \mathbf{Surv}_{j}^k$.
\end{itemize}
Recall that on $\mathbf{G}_\varepsilon$, for all $k \leq K_\varepsilon$, $v_k$ has a neighbor in $\mathscr{S}_k$ that belongs to a connected component of $\mathscr{S}_k$ with more than $N^{\delta}$ vertices. This means that for every such $k$, with probability $1- \mathcal O(N^{-1-\lambda})$, the random variable $\geom_k$ is the number of half edges of $\mathbf{Ext}^k$ attached to $v_k$ before attaching a half edge of $\mathbf{Surv}^k$ during the DFS. In order to compute its expectation, we first condition on $\{ \mathrm{deg}_{\mathscr S_k} (v_k) = d \}$, with $d>0$ fixed.

Notice that, conditional on the event $\{ \mathrm{deg}_{\mathscr S_k}(v_k) = d \} \cap \{ \geom_k < \mathrm{deg}_{\mathscr S_k}(v_k)  \}$, the law of $(\mathscr S_k,v_k)$ is the law of a rooted configuration model $\mathbf C_{ \mathbf{N}(k)}^d$ with root degree $d$ and degree sequence ${\mathbf{N}}(k) := ( N_i(k) )_{i \geq 0}$, conditioned on the root having one of its half edge paired to an element of $\mathbf{Surv} (\mathbf C_{{\mathbf{N}}(k)}^d)$. We define the new random variable $\tilde \geom _k$ as the number of half edges of the root paired to an element of $\mathbf{Ext}(\mathbf C_{{\mathbf{N}}(k)}^d)$ before pairing a half edge to an element of $\mathbf{Surv}(\mathbf C_{{\mathbf{N}}(k)}^d)$ when doing successive uniform matchings in the configuration model (with the convention $\tilde \geom _k=d$ if the root has no half-edged paired to an element of $\mathbf{Surv}(\mathbf C_{{\mathbf{N}}(k)}^d)$). We have the following equality for all $j$:
\[
\PP \big(
\geom_k = j
\, \big| \, 
\mathcal{F}_k \, \text{and} \, \deg_{\mathscr S_k}(v_k)=d
\big) 
= \PP 
\big( \tilde \geom_k = j \, \big| \, \tilde \geom_k < d \big) + \mathcal O(N^{-1-\lambda} ).
\]

Let 
\[  \tilde{\rho}_k := \frac{| \mathbf{Surv}(\mathbf C_{{\mathbf{N}}(k)}^d)|}{2 |E(\mathbf C_{{\mathbf{N}}(k)}^d)|} = 1 - \frac{| \mathbf{Ext}(\mathbf C_{{\mathbf{N}}(k)}^d)|}{2 |E(\mathbf C_{{\mathbf{N}}(k)}^d)|},   \]
the proportion of half-edges in $\mathbf{Surv}(\mathbf C_{{\mathbf{N}}(k)}^d)$ (resp. $\mathbf{Ext}(\mathbf C_{{\mathbf{N}}(k)}^d)$). This proportion is close to a constant $\rho_k$ that we now define with the help of additional notation. Recalling \eqref{eq: DefnGenSer}, let
\begin{equation*}
\begin{array}{cccc} 
p_i = p_i(k) = \frac{N_i(k)}{\sum_{ j \geq 0} N_j(k)}, & \quad  & \quad &  g_k = g_{(p_j)_{j\geq 0}}, \\
\phantom{b} \hat{p}_i = \hat{p}_i(k) = \frac{(i+1)p_{i+1}(k)}{\sum_{j \geq 0}jp_j(k)}, & \quad  & \quad & \phantom{l} \hat{g}_k = \hat{g}_{(p_j)_{j\geq 0}}=g_{(\hat{p}_j)_{j\geq 0}} ,
\end{array}
\end{equation*}
and let $\rho_k = \rho_{(p_j(k))_{j \geq 0}}$ be the largest solution in $[0,1]$ of $1-s = \hat{g}_k(1-s)$. We have the following lemma, whose proof is postponed to Section \ref{sub:technical_lemmas}.
\begin{lemma}\label{lemme: Bollobas estimates}
For all $0 \leq k \leq K_\varepsilon$, there exists $\lambda>0$ and $\eta> 0$ such that, conditionally on $\mathcal{F}_k$, uniformly in $k$,
\begin{equation*}  
\begin{cases} \PP\left(  \left| \frac{ |\mathbf{Ext}_i(\mathbf C_{{\mathbf{N}}(k)}^d)|}{2 |E(\mathbf C_{{\mathbf{N}}(k)}^d)| } - \frac{i p_i}{g_k'(1)} (1-\rho_k)^{i-1}  \right| \geq  N^{-\lambda}     \right) = \mathcal O \left( N ^{-1-\lambda}\right), \\
\PP\left(  \left| \frac{ |\mathbf{Surv}_i(\mathbf C_{{\mathbf{N}}(k)}^d)|}{2 |E(\mathbf C_{{\mathbf{N}}(k)}^d)| } - \frac{i p_i}{g_k'(1)}(1-(1-\rho_k)^{i-1})  \right| \geq  N^{-\lambda}   \right) = \mathcal O \left( N ^{-1-\lambda}\right), \\
\PP\left( \left| \frac{|\mathbf{Ext}(\mathbf C_{{\mathbf{N}}(k)}^d)|}{2|E(\mathbf C_{{\mathbf{N}}(k)}^d)|} - (1-\rho_k)     \right| \geq N^{-\lambda}   \right) = \mathcal O \left( N ^{-1-\lambda}\right), \\
\PP\left( \left| \frac{|\mathbf{Surv}(\mathbf C_{{\mathbf{N}}(k)}^d)|}{2|E(\mathbf C_{{\mathbf{N}}(k)}^d)|} - \rho_k   \right| \geq N^{-\lambda}  \right) = \mathcal O \left( N ^{-1-\lambda}\right). 
\end{cases}
\end{equation*}
\end{lemma}

Using this lemma, we obtain:
\begin{align}
\PP \big(   \geom_k = j   \, & \big| \, \mathcal{F}_k \, \text{and} \, \deg_{\mathscr S_k}(v_k)=d
\big) \notag\\
& =  \frac{\PP \big(  \{ \tilde\geom_k = j \} \cap \{ \tilde \geom _k < d\} \}    \cap \{
|\tilde{\rho}_k - \rho_k| \leq \mathcal{O}(N^{-\lambda}) \} \big)}{\PP \big( \tilde \geom_k < d \cap \{  |\tilde{\rho}_k - \rho_k| \leq \mathcal{O}(N^{-\lambda}) \} \big)} + \mathcal O \left( N^{-1-\lambda} \right).
\label{eq: geom=j}
\end{align}
Fix $j<d$. To estimate the probabilities in \eqref{eq: geom=j}, we successively match the half edges $c_1, \ldots , c_{j+1}$ of the root uniformly among the half edges of $\mathbf C_{{\mathbf{N}}(k)}^d$. Notice that if none of these half edges are matched together, this is equivalent to an urn model without replacement. At each of these steps, the proportion of available half edges of $\mathbf{Ext}(\mathbf C_{{\mathbf{N}}(k)}^d)$ diminishes and is therefore between $1- \widetilde{\rho}_k - \frac{d}{2|E(\mathbf C_{{\mathbf{N}}(k)}^d)|}$ and $1- \widetilde \rho_k$. Recalling that $|E(\mathbf C_{{\mathbf{N}}(k)}^d)|$ is uniformly of order $N$, we can write for every $j<d$
\begin{multline*}
\frac{\left( 1-\rho_k - C \frac{d}{N} +\mathcal O \left( N^{-\lambda} \right) \right)^j \, \left(\rho_k + \mathcal O \left( N^{-\lambda} \right) \right) }
{1 - \left( 1-\rho_k - C \frac{d}{N} + \mathcal O \left( N^{-\lambda} \right) \right)^d}
+\mathcal O \left( N^{-1-\lambda} \right) \\
\leq 
\PP \big(   \geom_k = j   \, \big| \, \mathcal{F}_k \, \text{and} \, \deg_{\mathscr S_k}(v_k)=d
\big)  \\
\leq \frac{\left( 1-\rho_k +\mathcal O \left( N^{-\lambda} \right) \right)^j \, \left(\rho_k + C \frac{d}{N}+ \mathcal O \left( N^{-\lambda} \right) \right) }
{1 - \left( 1-\rho_k + \mathcal O \left( N^{-\lambda} \right) \right)^d}
+\mathcal O \left( N^{-1-\lambda} \right)
\end{multline*}
where $C$ is a constant and the error terms $\mathcal O(N^{-\lambda})$ are the same everywhere and uniform in $d$. This easily translates into
\begin{align*}
\PP \big(   \geom_k = j   \, & \big| \, \mathcal{F}_k \, \text{and} \, \deg(v_k)=d
\big) \\ 
& =
\frac{\left( 1-\rho_k \right)^j \rho_k}
{1 - \left( 1-\rho_k \right)^d}
\left( 1+ \mathcal O \left(d^2 N^{-1} + dN^{-\lambda} \right) \right) \mathbf{1}_{\{j<d\}}
+\mathcal O \left( N^{-1-\lambda} \right)
\end{align*}
where, once again, the error terms are uniform. We can now compute the conditional expectation of $\geom_k$:
\begin{multline*}
\mathbb E \left[ \geom_k \middle| \mathcal F_k, \mathrm{deg}_{\mathscr S_k}(v_k) = d \right] \\=
\frac{ 1-{\rho}_k }
{ {\rho}_k
\left( 1 - \left(1 -{\rho}_k \right)^d \right) }
\left( -d {\rho}_k  \left(1-{\rho}_k \right)^{d-1} + 1 -  \left(1-{\rho}_k \right)^d \right) \left( 1+ \mathcal O \left(d^2 N^{-1} + dN^{-\lambda} \right) \right) \\
\hspace{13.2cm} + \mathcal O ( N^{-\lambda} ),
\end{multline*}
where the last error term comes from the fact the $\geom _k$ is smaller that $\mathcal O (N)$ by definition.

To finally compute the expectation of $\mathfrak{e}_k$, we want to sum the above equality with respect to the law of ${\rm deg}_{\mathscr S_k} (v_k)$. By construction, in $\mathscr S_{k-1}$, the vertex $v_k$ is attached to $v_{k-1}$ by a half edge of $\mathbf{Surv}^{k-1}$ chosen uniformly. Therefore, by Lemma \ref{lemme: Bollobas estimates}, the law of the degree of $v_k$ in $\mathscr S_k$ is given by
\[
\PP(\deg_{\mathscr S_k}(v_{k}) = d \, | \, \mathcal{F}_k) = \frac{(d+1)p_{d+1}(k-1)}{\rho_{k-1} g_{k-1}'(1)} \left( 1 - (1- \rho_{k-1})^d \right) (1 + \mathcal O(N^{-\lambda})),
\]
where the error term is uniform in $d$ and $k$. We can replace $k-1$ by $k$ in the above probabilities at the cost of a factor $1 + \mathcal O (N^{-\lambda})$ which is uniform in $k$ and $d$. Indeed, on $\mathbf G_\varepsilon$, the difference between $\mathscr S_{k-1}$ and $\mathscr S_k$ consists of at most $N^{1/\gamma}$ components of size at most $N^\delta$ and we have $p_{d}(k-1) = p_{d}(k) \left(1+\mathcal O (N^{1/\gamma +\delta -1}) \right)$ uniformly in $k$ and $d$. The difference between $\rho_{k-1}$ and $\rho_k$ is then of the same order by a Taylor expansion. Therefore
\begin{equation} 
\PP(\deg_{\mathscr S_k}(v_{k}) = d \, | \, \mathcal{F}_k) = \frac{(d+1)p_{d+1}(k)}{\rho_{k} g_{k}'(1)} \left( 1 - (1- \rho_{k})^d \right) (1 + \mathcal O(N^{-\lambda})),
\label{eq: DegSurv}
\end{equation}
and we get:
\begin{multline*}
\mathbb E \left[ \mathfrak e _k | \mathcal F_k \right] \\= 
\frac{ (1-\rho_k) }{ g'_k(1) \rho_k^2 }
\sum_{d\geq 0}
(d+1)p_{d+1}(k)
\left( -d {\rho}_k  \left(1-{\rho}_k \right)^{d-1} + 1 -  \left(1-{\rho}_k \right)^d \right)
\left( 1+ \mathcal O \left(d^2 N^{-1} + dN^{-\lambda} \right) \right) \\
\hspace{13.2cm} + \mathcal O ( N^{-\lambda} ) \\
= \frac{ (1-\rho_k) }{ g'_k(1) \rho_k^2 } \left( g'_k(1) - \rho_k g^{''}_k(1-\rho_k) - g'_k(1 - \rho_k) \right) +  \mathcal{O} (N^{\frac{1}{\gamma} - 1}) \cdot \mathcal{O} \left(  \sum_{ d \geq 0} d^2 p_d(k)  \right) + \mathcal{O}(N^{- \lambda}).
\end{multline*}
Notice that the error $\mathcal O(N^{-\lambda})$ is uniform in $k$ and $d$. Let us prove that $\sum_{ d \geq 0} d^2 p_d(k)$ is of order $1$. First note that it is of the same order as $\frac{1}{N}\sum_{d \geq 0} d^2 N_d(k)$, where we recall that $N_d(k)$ is the number of vertices of degree $d$ in $\mathscr{S}_k$. Indeed the number of vertices of $\mathscr{S}_k$ is of order $N$. Denoting by $N_{\geq d}(k)$ the number of vertices of degree larger than $d$ in $\mathscr{S}_k$, it holds that $N_{ \geq d}(k) \geq N_{ \geq d}(k+1)$ from the definition of the algorithm. This monotonicity implies that
\[ \frac{1}{N} \sum_{d \geq 0} d^2 N_d(k) \leq \sum_{d \geq 0} d^2  \frac{N_d(0)}{N}, \]
where the right-hand side converges to a finite limit by assumption \eqref{assump: moment}. Therefore
\begin{align}
\mathbb E \left[ \mathfrak e _k | \mathcal F_k \right] &= \frac{ (1-\rho_k) }{ g'_k(1) \rho_k^2 } \left( g'_k(1) - \rho_k g^{''}_n(1-\rho_k) - g'_k(1 - \rho_k) \right) + \mathcal{O}(N^{-\lambda})\nonumber  \\
&= \frac{1-\rho_k}{\rho_k} \left( 1- \hat{g}'_k(1-\rho_k)  \right) + \mathcal{O} (N^{-\lambda}), \label{eq: ExpecGeom}
\end{align}
where we used $1-\rho_k = \hat{g}_k(1-\rho_k) = g'_k(1-\rho_k)/g'_k(1)$.

\bigskip

Now that we know more about the random variable $\mathfrak e_k$, we can study in more depth the time difference between two consecutive ladder times.

With high probability, the first $\mathfrak{e}_k$ neighbours of $v_k$ in the tree constructed by the DFS all belong to distinct connected components of $\mathscr{S} _k \setminus \{ v_k \}$. We denote these components by $W^{(1)}, \ldots, W^{(\mathfrak{e}_k)}$. Notice that by Lemma \ref{lemme: Bollobas estimates}, for all $i \geq 0$, the ratio $|\mathbf{Ext}_{i}^k| / | \mathbf{Ext}^k|$ concentrates around $i p_i(k)(1 - \rho_k)^{i-1} / g_k^{'}(1)$. Therefore, conditionally on $\geom_k$, with probability $1 - \mathcal{O}(N^{-\lambda})$, the size of these components can be coupled with the size of $\geom_k$ i.i.d. Galton-Watson trees independent of $\geom_k$ and whose reproduction laws have generating series given by $\tilde g_{k}(s) := \hat{g}_k((1-\rho_k)s) / (1 - \rho_k)$. Therefore, the expected size of a component is given by:
\begin{equation*}  
\E \left[  \left| W^{(1)} \right| \, \Big| \, \mathcal{F}_k  \right] = \frac{1}{1 - \tilde g_{k}'(1) } + \mathcal{O}( N^{-\lambda})  = \frac{1}{1 - \hat{g}_k'(1-\rho_k)  } + \mathcal{O}( N^{-\lambda}),
\end{equation*}
and we obtain, using Equation \eqref{eq: ExpecGeom}:
\begin{align}
\E \left[ T_{k+1} - T_k \, \Big| \, \mathcal{F}_k \right] &= 1 + 2 \times \E \left[ \sum\limits_{p=1}^{\mathfrak{e}_k}  \left| W^{(i)} \right| \, \Big| \, \mathcal{F}_k \right] \nonumber \\
                                                           &= 1 + 2 \left( \frac{1-\rho_k}{\rho_k} \left( 1- \hat{g}'_k(1-\rho_k)  \right) + \mathcal{O}(N^{-\lambda}) \right) \left( \frac{1}{1 - \hat{g}_k'(1-\rho_k)  } + \mathcal{O}( N^{-\lambda}) \right)  \nonumber \\
                                                           &= \frac{2-\rho_k}{\rho_k}  + \mathcal{O}\left( N^{-\lambda} \right) \nonumber \\
                                                           &= f \left( \frac{N_0(k)}{N} , \frac{N_1(k)}{N} , \ldots  \right) + \mathcal{O}( N^{-\lambda})
\label{eq: evolution of T_k}
\end{align}
which is the desired result for the evolution of $(T_k)$.

\bigskip

We now turn to the evolution of the $(N_i(k))$ which follows from the analysis of the expectation of the terms \eqref{eq2} and \eqref{eq3}. The term \eqref{eq2} accounts for the vertices of degree $i$ in the graph $\mathscr S_k \setminus \mathscr S_{k+1}$. Among these vertices, the vertex $v_k$ has a special role because it is conditioned to be matched to an element of $\mathbf{Surv}^k$. Therefore, we write
\[
V_i(\mathscr S_k \setminus \mathscr S_{k+1}) = \mathbf 1_{\{\deg_{\mathscr S_k} (v_k) =i\}} + \sum_{j=1}^{\mathfrak e_k} \sum_{v \in W^{(j)} } \mathbf 1_{\{\deg_{\mathscr S_k} (v) =i\}}.
\]
We first compute the expectation of the sum in the right hand side of the previous equation.
The connected components $W^{(1)}, \ldots, W^{(\geom_k)}$ are well approximated by independent Galton-Watson trees with offspring distribution given by $\hat{g}_n$, conditioned on extinction. Let $C_i$ be the number of individuals that have $i-1$ children in such a tree. These individuals all have degree $i$ in $\mathscr S_k$ and contribute to the sum. The quantity $C_i$ satisfies the following recursion established by summing over the possible number of children of the root:
\[ \E[C_i] = \E \left[ \sum\limits_{l \geq 0} \hat{p}_l (1-\rho_k)^{l}  \left( lC_i + \delta_{l=i-1}  \right) \right] = \E[C_i] \hat{g}_k'(1-\rho_k) + \hat{p}_{i-1}(1-\rho_k)^{i-1},  \]
which leads to
\begin{equation} \label{eq:Ci}
\E[C_i] = \frac{\hat{p}_{i-1} (1- \rho_k)^{i-1}}{1 - \hat{g}_k'(1- \rho_k)}.
\end{equation}
Therefore, multiplying \eqref{eq: ExpecGeom} and \eqref{eq:Ci}, we obtain
\begin{equation} \label{eq:Vi1}
\E \left[ V_i ({\mathscr{S}_k} \setminus \mathscr{S}_{k+1}) \, \Big| \, \mathcal{F}_k   \right] = \mathbb P \left( \deg_{\mathscr{S}_k} (v_k) = i \, \Big| \, \mathcal{F}_k  \right) + \frac{\hat{p}_{i-1}}{\rho_k}\left( 1 - \rho_k \right)^{i-1} + \mathcal{O}\left( N^{-\lambda}  \right).
\end{equation}
Note that the sum over $i$ of these terms gives the total number of vertices in the connected components associated to the first $\geom_k$ children of $v_k$: $(1-\rho_k)/ \rho_k + o(1)$. This is in agreement with Equation \eqref{eq: evolution of T_k}.

\bigskip

For the last term \eqref{eq3}, we use the fact that, with probability $1 - \mathcal{O}( N^{-\lambda})$, the elements of $\mathbf m_{v_k}$ that belong to $\mathscr S_{k+1}$ are distinct.
One of these elements is $v_{k+1}$ and has a special role, while all the others correspond to a uniform matching to a half edge of a vertex of $\mathscr S_{k+1} \setminus \{v_{k+1}\}$ and therefore have degree $i$ with probability $\hat p _{i-1}$. Note that there are $\deg_{\mathscr{S}_k}(v_k) - \mathfrak e_k - 1$ terms in the sum \eqref{eq3} when excluding $v_{{k+1}}$. We have, taking into account that $o_v =1$ up to a negligible term:
\begin{align} \label{eq:Vi2}
\E & \left[
\sum_{v \,\in  \, \mathbf m_{v_k} \cap \mathscr S_{k+1}}
\left( - \mathbf{1}_{ \deg_{\mathscr{S}_k}(v) =i } + \mathbf{1}_{ \deg_{\mathscr{S}_k}(v)= i+ o_v } \right)
\right] \nonumber \\
             & \quad = 
             - \mathbb P \left( \deg_{\mathscr{S}_k} (v_{k+1}) = i \, \Big| \, \mathcal{F}_k  \right)
             + \mathbb P \left( \deg_{\mathscr{S}_k} (v_{k+1}) = i + 1 \, \Big| \, \mathcal{F}_k  \right) \nonumber\\
             & \quad \quad \quad +
              \E\left[ \deg(v_k) - \geom_k -1 \, \Big| \, \mathcal{F}_k \right]
              \left( -\hat p _{i-1} + \hat p _{i}\right)
              + \mathcal{O}( N^{-\lambda} ) \nonumber \\
             & \quad = 
             - \mathbb P \left( \deg_{\mathscr{S}_k} (v_{k+1}) = i \, \Big| \, \mathcal{F}_k  \right)
             + \mathbb P \left( \deg_{\mathscr{S}_k} (v_{k+1}) = i + 1 \, \Big| \, \mathcal{F}_k  \right) \nonumber\\
             & \quad \quad \quad +
             \frac{1}{\rho_k} \Bigg[ \Big( \hat{g}_k'(1) - (1- \rho_k)\hat{g}_k'(1-\rho_k) - (1-\rho_k)(1-\hat{g}_k'(1-\rho_k) )- \rho_k \Big) \nonumber  \\ 
             & \hspace{7.5cm} \times \left( - \hat{p}_{i-1} + \hat{p}_i \right) \Bigg] + \mathcal{O}( N^{-\lambda} ) \nonumber \\
             & \quad = 
             - \mathbb P \left( \deg_{\mathscr{S}_k} (v_{k+1}) = i \, \Big| \, \mathcal{F}_k  \right)
             + \mathbb P \left( \deg_{\mathscr{S}_k} (v_{k+1}) = i + 1 \, \Big| \, \mathcal{F}_k  \right) \nonumber\\
             & \quad \quad \quad +
             \frac{1}{\rho_k} \left(1- \hat{g}_k'(1) \right)\left( \hat{p}_{i-1} - \hat{p}_i \right) + \mathcal{O}( N^{-\lambda} ).
\end{align}

Hence, summing \eqref{eq:Vi1} and \eqref{eq:Vi2}, we obtain the total contribution of \eqref{eq2} and  \eqref{eq3}:
\begin{align*} 
\E \left[ N_i(k+1) - N_i(k) \, \Big| \, \mathcal{F}_k   \right] &= 
- \mathbb P \left( \deg_{\mathscr{S}_k} (v_k) = i \, \Big| \, \mathcal{F}_k  \right) 
- \mathbb P \left( \deg_{\mathscr{S}_k} (v_{k+1}) = i \, \Big| \, \mathcal{F}_k  \right) \\
& \qquad + \mathbb P \left( \deg_{\mathscr{S}_k} (v_{k+1}) = i + 1 \, \Big| \, \mathcal{F}_k  \right) \nonumber\\
& \qquad - \frac{\hat{p}_{i-1}}{\rho_k}\left( 1 - \rho_k \right)^{i-1} + \frac{1}{\rho_k}\left(1- \hat{g}_k'(1) \right)\left( \hat{p}_{i-1} - \hat{p}_i  \right) + \mathcal{O}( N^{-\lambda} ).
\end{align*}
Recall that the conditional law of $\deg_{\mathscr{S_k}}(v_k)$ is given by equation \eqref{eq: DegSurv}. Similar arguments to those used to compute it lead to
\[
\mathbb P \left( \deg_{\mathscr{S}_k} (v_{k+1}) = i \, \Big| \, \mathcal{F}_k  \right) = \mathbb P \left( \deg_{\mathscr{S}_k} (v_{k}) = i - 1 \, \Big| \, \mathcal{F}_k  \right) + \mathcal O (N^{-\lambda}).
\]
Therefore, we have
\begin{align*}
\E \left[ N_i(k+1) - N_i(k) \, \Big| \, \mathcal{F}_k   \right] &= - \frac{\hat{p}_{i-1}}{\rho_k} \left( 1 - \left( 1 - \rho_k \right)^{i-1} \right)
- \frac{\hat{p}_{i-1}}{\rho_k}\left( 1 - \rho_k \right)^{i-1} \\
& \qquad + \frac{1}{\rho_k}\left(1- \hat{g}_k'(1) \right)\left( \hat{p}_{i-1} - \hat{p}_i  \right) + \mathcal{O}( N^{-\lambda} ) \\
&= - \frac{\hat{p}_{i-1}}{\rho_k} + \frac{1}{\rho_k}\left(1- \hat{g}_k'(1) \right)\left( \hat{p}_{i-1} - \hat{p}_i  \right) + \mathcal{O}( N^{-\lambda} ) \\
&= f_i \left( \frac{N_0(k)}{N} , \frac{N_1(k)}{N} , \ldots  \right) + O \left( N^{-\lambda} \right). 
\end{align*}
This ends the proof of lemma \ref{lemma:trends}.
\end{proof}

We no turn to the proof of our main Theorem.

\begin{proof}[Proof of Theorem \ref{theo: fluidlimits}.] 
Let $t_\varepsilon = \sup \{ t \in [0,1], \, \lfloor tN \rfloor \leq K_\varepsilon \}$.
Let $\beta \in (0, 1/2)$ be as in Lemma \ref{lemma:trends}. Let $\nu \in \left(1, \frac{1-\beta}{\beta}  \right)$ and $w = N^{(1+\varepsilon)\beta}$. Fix $\eta>0$ and $I_\eta \geq 1$ such that for all $t\in[0,1)$:
\[  \max \left\{ \sum\limits_{i \geq I_\eta} z_i^*(t), \sum\limits_{i \geq I_\eta} iz_i^*(t) , \sum\limits_{i \geq I_\eta} i(i-1) z_i^*(t)   \right\} \leq \eta  \]
and such that w.h.p. 
\[  \max \left\{ \sum\limits_{i \geq I_\eta} N_i(\lfloor tN \rfloor), \sum\limits_{i \geq I_\eta} iN_i(\lfloor tN \rfloor) , \sum\limits_{i \geq I_\eta} i(i-1) N_i(\lfloor tN \rfloor)   \right\} \leq \eta N.  \] 
Observe that such a $I_\eta$ exists since, by monotonicity, it is enough to check the inequalities at $t=0$.
We prove by induction on $k$ that there exists a nondecreasing sequence $(B_k)_{0 \leq k \leq N/w}$ and $\delta>0$ such that $\limsup_N B_{N/w} / N \leq C \eta$ for some constant $C>0$ and such that
\begin{equation}\label{eq:recurrence_ass}
\forall 0 \leq i \leq I_\eta, \, \forall 0 \leq k \leq N/w, \quad \mathbb{P}\left( \left| N_i(kw) - z_i^*(kw/N)N \right| \geq B_k \right) \leq \exp( - N^\delta).
\end{equation}

Since the sequence of configuration models has asymptotic degree distribution $\bm \pi$ and since $z_i^*(0) = {\bm \pi}_i$, there exists $N_\eta \geq 1$ such that $| N_i(0)/N - z_i^*(0)| \leq \eta$ for all $N \geq N_\eta$. This proves the initialization step.\\
\noindent Suppose that the property is verified for $0 \leq k \leq N/w -1$. Rewrite
\begin{align}
N_i\left((k+1)w\right) - z_i^*(&(k+1) w  / N) N \nonumber  \\
								&=  N_i\left((k+1)w\right) - N_i(kw) - w f_i\left((N_j(kw)/ N)_j\right) \label{eq: troncon} \\
                                &+  N_i(kw) - z_i^*(kw/N) N  \label{eq: HR} \\
                                &+ z_i^*(kw / N)N - z_i^*((k+1)w / N )N + wf_i\left((z_j^*(kw/N))_j \right) \label{eq: Taylor eq diff} \\
                                &+ wf_i\left((N_j(kw)/N)_j \right) -  wf_i\left((z_j^*(kw/N))_j \right)   \label{eq: Lipsch}
\end{align}
We analyse each term separately. \\
{\bf The term \eqref{eq: troncon}.} Let $\alpha \in \left( \frac{1+\nu}{2}\beta, \nu \beta  \right)$. Then, there exists $\lambda'>0$ such that with high probability,
\begin{equation} \label{eq:ProofWormald}
| \eqref{eq: troncon} | \leq N^{\alpha + \beta} + N^{(1+\nu)\beta - \lambda'}.
\end{equation}
Indeed, by the trend assumption, there exists a function $g(N)$ such that $g(N) = O( N^{ - \lambda })$ and such that the process
\[  \{ N_i\left(k w + l \right) - Y(kw) - l f_i\left(N_j(kw)/ N)_j\right) - lg(N) \}_{1 \leq l \leq w}  \]
is a supermartingale with increments bounded by $N^\beta$. Using Azuma-Hoeffding inequality with $l=w$, this implies that:
\begin{multline*}
\mathbb{P} \Bigg(  \big(N_i\left((k+1)w\right) - N_i(kw)  -  w f_i\left(N_j(kw)/ N)_j\right)  \big)  >  \left( N^{\alpha + \beta} + w  g(N) \right) \Bigg)
\\ \leq \exp \left( - \frac{1}{2w} \frac{ N^{2\alpha + 2\beta} }{N^{2 \beta}}  \right).
\end{multline*}
Since $\lambda'<\lambda$ and since $wg(N) = O ( N^{(1+\nu)\beta) - \lambda})$, we have proved that:
\begin{align*}
\mathbb{P} \Bigg(  \big(N_i\left((k+1)w\right) - N_i(kw)  -   w f_i\left(N_j(kw)/ N)_j\right) \big)  >  \big( N^{\alpha + \beta} + & N^{(1+\nu) \beta - \lambda'}  \big) \Bigg)  \\
&\leq \exp \left( - \frac{ N^{2 \alpha - (1 + \nu)\beta} }{2}   \right).
\end{align*} 
Using a similar argument, one can obtain the same bound on the probability that $\big( N_i\left((k+1)w\right) - N_i(kw)  -   w f_i\left(N_j(kw)/ N)_j\right)   \big)  < \left( N^{\alpha + \beta} + N^{(1+\nu)\beta - \lambda'}\right)$ and thus obtain inequality \eqref{eq:ProofWormald}.

\noindent {\bf The term \eqref{eq: HR}.} By our induction hypothesis, it can be bounded by $B_k$ with high probability.

\noindent {\bf The term \eqref{eq: Taylor eq diff}.} Using that $z_i^*$ is a solution of $z_i'(t) = f_i\left((z_j(t))_j\right)$ and the mean value Theorem, there exists $\theta \in [kw/N, (k+1)w/N]$ such that
\begin{equation}
|\eqref{eq: Taylor eq diff}| = w |f_i\left((z_j^*(\theta))_j\right) - f_i\left((z_j^*(kw/N))_j\right) |.
\end{equation}
Since $t \mapsto f_i( (z_j^*(t))_j)$ is smooth, we get that there exists a constant $C>0$ such that for every $i \leq I_\eta$:
\begin{equation}
| \eqref{eq: HR} | \leq C \frac{w^2}{N}.
\end{equation}

\noindent {\bf The term \eqref{eq: Lipsch}.} By our induction hypothesis and by our choice of $I_\eta$, there exists a constant $C>0$ such that w.h.p.
\begin{align*}
\left| \sum\limits_{j\geq 0} j N_j(kw)/N  - \sum_{j \geq 0} j z_j^*(kw/N) \right| &\leq C I_\eta^2 B_k/N + \sum\limits_{j>I_\eta} j z_j^*(kw/N) + \sum\limits_{j>I_\eta} j N_j(kw)/N \\
&\leq  C I_\eta^2 B_k/N  + 2 \eta.
\end{align*}
Similarly:
\[   \left| \sum\limits_{j\geq 1} j(j-1) N_j(kw)/N  - \sum\limits_{j\geq 1} j (j-1) z_j^*(kw/N) \right| \leq   C I_\eta^3 B_k/N + 2 \eta. \]
and
\[ \left| \rho_{ (N_j(kw)/N)_j } - \rho_{ (z_j^*(kw/N))_j } \right| \leq C I_\eta^3 B_k/N + 2 \eta. \]
Moreover, the quantities $\rho_{(z_j^*(t))_j}$ and $\sum_{j \geq 0} j z_j^*(t)$, which appear in the denominator of $f_i$, are bounded away from $0$ for all $t \in (0, t_\varepsilon)$ by our choice of $K_\varepsilon$.
Therefore, it can be checked that with high probability, there exists a constant $C$ such that
\begin{align*}
\left|  \eqref{eq: Lipsch}  \right| &= \left| wf_i\left((N_j(kw)/N)_j \right) -  wf_i\left((z_j^*(kw/N))_j \right)   \right| \\
&\leq  C w \left[  \left( \left| \frac{N_i(kw)}{N} - z_i^*(kw/N) \right| + \left| \frac{N_{i+1}(kw)}{N}- z_{i+1}^*(kw/N) \right| \right) + I_\eta^3 B_k/N + \eta \right] \\
                                    &\leq 
                                    \left\{ 
                                    \begin{array}{lr}
                                     C w ( 2   B_k/N + I_\eta^3 B_k/N + \eta) & \text{if $i \leq I_\eta-1$}, \\
                                     C w (  B_k/N + I_\eta^3 B_k/N + 3 \eta )  & \text{otherwise},
									\end{array}                                      
									\right.
\end{align*}
the case $i \leq I_\eta -1$ resulting from our induction hypothesis, and the case $i=I_\eta$ from the definition of the truncation index $I_\eta$.

\noindent \textbf{Conclusion.} Putting all previous arguments together, we deduce that there exists a constant $C>0$ such that, by taking
\begin{equation}\label{eq:rec_on_Bk}
B_{k+1} = B_k \left( 1 + C I_\eta^3 \frac{w}{N}  \right) + N^{\alpha + \beta} + N^{(1+\nu)\beta - \lambda'} + C N^{2(1+\nu)\beta - 1} + C w \eta, 
\end{equation} 
the following inequality holds:
\begin{equation*}
\forall 0 \leq i \leq I, \quad \mathbb{P}\left( \left| N_i((k+1)w) - z_i^*((k+1)w/N)N \right| \geq B_{k+1} \right) \leq \exp( - N^\delta).,
\end{equation*}
which concludes the heredity argument of the induction. \\
Finally, notice that from our calibration of the constants $\alpha, \beta$, the main additive term in \eqref{eq:rec_on_Bk} is the last one of order $w$. On the other hand, the multiplicative term gives a contribution of order $1$ after $N/w$ steps. Therefore, $\limsup_N B_{N/w} / N \leq C \eta$, ending the proof of Theorem \ref{theo: fluidlimits}.
\end{proof}


\section{Proofs of the main results}

We now turn to the proofs of Proposition \ref{prop:law} and Theorem \ref{th:profile}. We will use the following general fact about contour processes of trees, which can be easily proved by induction on $n$.
\begin{equation}\label{eq:GeneralFactContourProcess}
\forall n \geq 0, \quad  \text{number of vertices explored by the DFS by time n} = \frac{n + X_n}{2}.
\end{equation}

\subsection{Proof of Proposition \ref{prop:law}}
The time variable in Proposition \ref{prop:law} is the proportion of vertices explored by the DFS whereas in Theorem \ref{theo: fluidlimits} it is the index of the ladder times $T_k$. Therefore, to prove Proposition \ref{prop:law}, a first step is to study the asymptotic proportion of vertices explored by time $T_k$. By Equation \eqref{eq:GeneralFactContourProcess}, for all $N \geq 1$ and all $1 \leq k \leq K_\varepsilon$, this proportion is given by $\omega(T_k) := \frac{k+T_k}{2 N}$. Therefore, by Theorem \ref{theo: fluidlimits}, this proportion satisfies
\begin{equation} \label{eq:ztilde}
\omega(T_k) = \widetilde{z} \left( \frac{k}{N} \right) + o(1), \quad \quad \text{with} \quad \widetilde{z} (t) = \frac{1}{2}\left( t + z\left(t \right) \right). 
\end{equation}

Fix $0 \leq \alpha < \alpha_c$ and recall the definition of $\tau^{(N)}(\alpha)$ given in Proposition \ref{prop:law}. At time $T_{N\widetilde{z}^{-1}(\alpha)}$, by Equation \eqref{eq:ztilde}, the number of explored vertices is $\alpha N + o(N)$. Therefore $\tau^{(N)}(\alpha) = T_{N\widetilde{z}^{-1}(\alpha)} + o(1)$. Hence, for all $i\geq 0$,
\begin{align*}
N_i(\tau^{(N)}(\alpha)) 
&= N_i\left( T_{N\widetilde{z}^{-1}(\alpha)} + o(1)  \right) \\
&= N z_i \left( \widetilde{z}^{-1}(\alpha)  \right) + o(N).
\end{align*}
It is easy to check that the sequence of functions $(z_i \circ \widetilde{z}^{-1})_{i \geq 0}$ is solution of the system (S') of Lemma \ref{lemme:EqDiff2} below. The generating function $g(\alpha,s)$ of Proposition \ref{prop:law} is given by 
\[  g(\alpha,s) = \frac{1}{1-\alpha} \sum\limits_{i \geq 0} z_i \circ \widetilde{z}^{-1}(\alpha) s^i,  \]
which is the desired result by Equation \eqref{eq: SolSerGen} and Proposition \ref{prop:eqdif}.

\subsection{Proof of Theorem \ref{th:profile}}
Let $N\geq 1$. By definition, for all $1 \leq k \leq K_\varepsilon$, the contour process of the tree constructed by the DFS algorithm at time $T_k$ is located at point $(T_k,k)$. Furthermore, by Theorem \ref{theo: fluidlimits}, 
\[  (T_k,k) = N \left( z\left( \frac{k}{N}\right) + o(1), \frac{k}{N} \right) .   \]
Note that $|T_{k+1} - T_k| = o(N)$ and that, between two consecutive $T_k$'s, the contour process cannot fluctuate by more than $o(N)$. Hence, after normalization by $N$, the limiting contour process converges to the curve $(z(t),t)$ where $t$ ranges from $0$ to $t_{\max} = \sup \{ t > 0, \, z'(t) < +\infty   \}$. Recall that by the definition of $z$ in Theorem \ref{theo: fluidlimits} and Equation \eqref{eq:deff}, $z'(t)=(2-\rho_{(z_i(t))_{i\geq 0}})/\rho_{(z_i(t))_{i\geq 0}}$. Hence, if we parametrize $(z(t),t)$ in terms of $\rho = \rho_{(z_i(t))_{i\geq 0}}$, the curve can be written $(x(\rho),y(\rho))$ where the functions $x$ and $y$ satisfy
\[  \frac{x'(\rho)}{y'(\rho)}  = \frac{2 - \rho}{\rho} . \]
Note that when $t$ ranges from $0$ to $t_{\max}$, the parameter $\rho$ decreases from $\rho_{\bm \pi}$ to $0$. In order to get a second equation connecting $x'$ and $y'$, we go back to the discrete process and observe that, by Equation \eqref{eq:GeneralFactContourProcess}, the number of explored vertices at time $T_k$ is equal to $(k+T_k)/2$. Using the notation of Proposition \ref{prop:law}, let $\hat{g}(\alpha,\cdot)$ be the size-biased version of $g(\alpha,\cdot)$. For all $\rho \in (0, \rho_{\bm \pi}]$, let $\alpha(\rho)$ be the unique solution of $1-\rho=\hat{g}(\alpha(\rho),1-\rho)$. After renormalizing by $N$, we get that:
\[  \frac{x(\rho) + y(\rho)}{2} = \alpha(\rho). \]
This yields the following system of equations:
\[
\begin{cases}
\frac{x'(\rho)}{y'(\rho)}  = \frac{2 - \rho}{\rho}  \\
\frac{x'(\rho) + y'(\rho)}{2} = \alpha'(\rho).
\end{cases}
\]
Therefore,
\[ 
\begin{cases}
x'(\rho) = (2-\rho) \alpha'(\rho) \\
y'(\rho) = \rho \alpha'(\rho)  .
\end{cases}
\]
Integrating by parts, this gives the formulas for $x^\uparrow$ and $y^\uparrow$ in Theorem \ref{th:profile}. Fix $\rho \in (0, \rho_{\bm \pi}]$. Then, the asymptotic profile of the decreasing phase of the DFS is obtained by translating horizontally each point $(x^\uparrow(\rho),y^\uparrow(\rho))$ of the ascending phase  to the right by  twice the asymptotic proportion of the giant component of the remaining graph of parameter $\rho$, which is $2(1-g(\alpha(\rho),1-\rho))$. Indeed, the time it takes to the DFS to return at a given height $k$ attained during the ascending phase corresponds to the time of exploration of the giant component of the unexplored graph at time $T_k$. The latter is given by twice the number of vertices of the giant component which is equal to $2 ( 1 - g_k(1 - \rho_k) )$.

\section{Technical lemmas} 
\subsection{Asymptotic densities in a configuration model}\label{sub:technical_lemmas}
In this section we establish Lemma \ref{lemme: Bollobas estimates}. The proofs of each of the four estimates follow the same scheme, therefore we only focus on the proof the last one, namely that there exists $\lambda >0$ such that:
\begin{equation*}
\PP\left( \left| \frac{|\mathbf{Surv}(\mathbf C_{{\mathbf{N}}(k)}^d)|}{2|E(\mathbf C_{{\mathbf{N}}(k)}^d)|} - \rho_k   \right| \geq N^{-\lambda}   \right)= \mathcal O \left( N ^{-1-\lambda}\right).
\end{equation*}
First, notice that for the values of $k$ that we consider and under our assumptions \eqref{assump: moment} and \eqref{assump: degree}, the number of edges and vertices of the graphs $\mathbf C_{{\mathbf{N}}(k)}^d$ are all of order $N$. Therefore, it is enough to prove the following bound:
\begin{equation*}
\PP\left( \left| \frac{|\mathbf{Surv}(\mathbf C_{{\mathbf{N}}(k)}^d)|}{2|E(\mathbf C_{{\mathbf{N}}(k)}^d)|} - \rho_k   \right| \geq |E(\mathbf C_{{\mathbf{N}}(k)}^d)|^{-\lambda}   \right)= \mathcal O \left( |E(\mathbf C_{{\mathbf{N}}(k)}^d)| ^{-1-\lambda}\right).
\end{equation*}
This is a direct consequence of the two following Lemmas. The first one is a general concentration result for the configuration model.
\begin{lemma}\label{lemme:concentration}
Fix $\gamma > 2$ and $n \geq 1$. Let $\mathbf d = (d_1, \ldots, d_n)$ be such that $\max \{d_1,\ldots, d_n \} \leq n^{1/\gamma}$.  Fix also $\delta \in (0,1/2)$ and recall that, for a graph $G$, $\mathbf{Surv}(G)$ denotes the set of half edges of $G$ attached to a vertex $v$ such that the connected component of $v$ after removing this half edge has at least $n^\delta$ vertices. Let $m = \sum_i d_i$ the number of half edges of a configuration graph $\mathcal C (\mathbf d)$, then, for any $\delta' \geq \delta$ one has
\[
\mathbb P \left( \left| \frac{|\mathbf{Surv}(\mathcal C (\mathbf d))|}{m} -  \frac{\mathbb E \left(|\mathbf{Surv}(\mathcal C (\mathbf d)) \right)|}{m} \right| \geq \frac{n^{\delta' + \frac{1}{\gamma}}}{2 \sqrt m }\right) \leq C \exp\left(- C n^{2(\delta' - \delta)}\right).
\]
\end{lemma}
The second Lemma consists in an estimation of the expectation of $|\mathbf{Surv}( \mathcal C (\mathbf d^{(n)}))|$ for a sequence of configuration models that satisfy the assumptions of Proposition \ref{prop:law}.
\begin{lemma}\label{lemme:EstimateExp}
Let $(\mathcal C ( \mathbf{d}^{(n)} ) )_{n \geq 1}$ be a sequence of configuration models with asymptotic degree distribution $\bm \pi$. We suppose that $\bm \pi$ is supercritical in the sense of Definition \ref{defn:supercritical} and that the sequence $\mathbf{d}^{(n)}$ satisfies assumption \eqref{assump: moment} and \eqref{assump: degree}.  

For all $n \geq 1$, let $g_n$ be the generating series associated to the empirical distribution of the degree sequence $\mathbf{d}^{(n)}$. Let $\rho_n$ be the smallest positive solution of the equation $\hat{g}_n(1-x)=1-x$. Then, for $n$ sufficiently large:
\[
\frac{\mathbb{E}\left[ |\mathbf{Surv}( \mathcal C (\mathbf d^{(n)}))| \right] }{2 g_n'(1)} = \rho_n + \mathcal{O}\left( n^{ 2 \delta + \frac{1}{\gamma} - 1} \right).
\]
\end{lemma}

\begin{proof}[Proof of Lemma \ref{lemme:concentration}.]
In order to prove Lemma \ref{lemme:concentration}, it is sufficient to check that the function $\mathbf{Surv}(\cdot)$ is Lipschitz in the following sense. We say that two configuration models are related by a switching if they differ by exactly two pairs of matched half-edges (see Figure \ref{fig:switch}). Then, we claim that $\mathbf{Surv}(\cdot)$ is such that, for any two graphs $\mathrm{G}_1$ and $\mathrm{G}_2$ differing by a switching:
\begin{equation} \left|  | \mathbf{Surv}(\mathrm{G}_1) | - | \mathbf{Surv}(\mathrm{G}_2) | \right| \leq 8  n^{ \delta + \frac{1}{\gamma}}.
\label{eq: Lipshitz switching}
\end{equation} 
Using a result of Bollob\'as and Riordan \cite[Lemma 8]{MR3343756}, this regularity implies the following concentration inequality:
\begin{equation} \PP\left( \left| | \mathbf{Surv}(\mathbf C_{{\mathbf{N}}(k)}^d) | - \E[ | \mathbf{Surv}(\mathbf C_{{\mathbf{N}}(k)}^d) | ]   \right| \geq t  \right) \leq 2 \exp \left( \frac{-t^2}{C n^{2\delta + \frac{2}{\gamma}} m}  \right),
\label{eq:Bollobas concentration}
\end{equation}

\begin{figure}[!h]
\centering
\includegraphics[height=2cm]{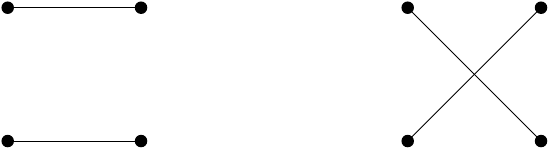}
\caption{\label{fig:switch}Switching two edges in a graph. }
\end{figure}

By taking $t=n^{\delta' + \frac{1}{\gamma}}  m^{\frac{1}{2}}$ in \eqref{eq:Bollobas concentration}, we obtain Lemma \ref{lemme:concentration}.

It remains to prove inequality \eqref{eq: Lipshitz switching}. To pass from $\mathrm{G}_1$ to $\mathrm{G}_2$, one has to delete two edges in $\mathrm{G}_1$ and then add two other edges. Therefore, it suffices to study the effect of adding an edge $e$ on a graph $\mathrm{G}$ having maximal degree $n^{1 / \gamma}$. Indeed, the effect of deleting an edge $f$ of a graph $\mathrm{H}$ is equal to the effect of adding the edge $f$ to the graph $\mathrm{H} \setminus \{f\}$.

Let $u$ and $v$ be the extremities of $e$. Let us define two partial orders associated respectively to $u$ and $v$ among the half-edges of $\mathbf{Ext}( \mathrm G) = \mathbf{Surv}(\mathrm G)^c$. We say that:
\begin{itemize}
\item $e_1 \preceq_u e_2$ if all the paths connecting $e_2$ to $u$ contain $e_1$,
\item $e_1 \preceq_v e_2$ if all the paths connecting $e_2$ to $v$ contain $e_1$.
\end{itemize}
Let $f_u$ (resp. $f_v$) be a maximal element for the partial order $\preceq_u$ (resp. $\preceq_v$), and denote by $\mathscr C_{f_u}$ (resp. $\mathscr C_{f_v}$) the connected component of the extremity of $f_u$ (resp. $f_v$) after the removal of $f_u$ (resp. $f_v$) in $\mathrm G$. Then, by maximality, the set of extremities of half-edges that change their status from $\mathbf{Ext}(\mathrm G)$ to $\mathbf{Surv}(\mathrm{G})$ after adding $e$ is included in $\mathscr C_{f_u} \cup \mathscr C_{f_v}$. See Figure \ref{fig:boll} for an illustration.
\begin{figure}[!h]
\centering
\includegraphics[scale=0.8]{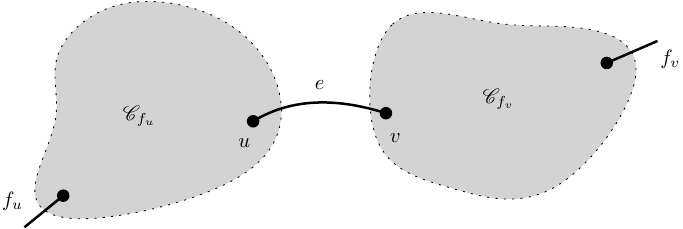}
\caption{\label{fig:boll} Effect of the edge $e$. }
\end{figure}
Since $f_u$ (resp. $f_v$) was in $\mathbf{Ext}(\mathrm{G})$, the number of vertices in $\mathscr C_{f_u}$ (resp. $\mathscr C_{f_v}$) is at most $n ^\delta$. Since the maximal degree of a vertex in $\mathrm{G}$ is $n^{1/\gamma}$, we deduce that:
\[  \left| | \mathbf{Surv}_n(\mathrm{G}) | - | \mathbf{Surv}_n(\mathrm{G} \cup e) | \right| \leq 2 n^{\delta + \frac{1}{\gamma}}. \]
This implies \eqref{eq: Lipshitz switching} and Lemma \ref{lemme:concentration}.
\end{proof}

\begin{proof}[Proof of Lemma \ref{lemme:EstimateExp}.]
Fix $n \geq 1$. Let $e$ be a uniformly chosen half-edge in $\mathcal C (\mathbf d^{(n)})$ and let $v$ be the extremity of $e$. We denote $\mathscr C_v$ the connected component of $v$ inside $\mathcal C (\mathbf d^{(n)}))$ after removing $e$. Then, since
$\mathbb{E}\left[ |\mathbf{Surv}( \mathcal C (\mathbf d^{(n)}))| \right] = 2g_n'(1) \mathbb{P}\left( e \in \mathbf{Surv}( \mathcal C (\mathbf d^{(n)})) \right)$, it is sufficient to prove that
\begin{equation}\label{eq:ProbSurv}
\mathbb{P}\left( | \mathscr C_v | \geq n^\delta \right) = \rho_n + \mathcal{O} \left( n^{2 \delta + \frac{2}{\gamma} - 1}  \right).
\end{equation}

Let $(d_i^{\uparrow})_{1 \leq i \leq n}$ and $(d_i^{\downarrow})_{1 \leq i \leq n}$ respectively denote the increasing and decreasing reordering of the degree sequence $(d_i)_{1 \leq i \leq n}$:
\[ d_1^{\uparrow} \leq \cdots \leq d_n^{\uparrow} \quad \quad \text{and} \quad \quad d_1^{\downarrow} \geq \cdots \geq d_n^{\downarrow}.  \] 

In order to prove \eqref{eq:ProbSurv}, we will use a coupling argument. More precisely, we first introduce two Galton-Watson trees: 
\begin{itemize}
\item $\mathscr T^-$ with reproduction law: $q_i^- := \frac{(i+1)| \{ j \geq \lceil n^\delta \rceil, \, d^{\downarrow}_j = i + 1  \} |}{\sum_{j \geq \lceil n^\delta \rceil} (j+ 1) d^{\downarrow}_j }$,
\item $\mathscr T^+$ with reproduction law: $q_i^+ := \frac{(i+1)| \{ j \geq \lceil n^\delta \rceil, \, d^{\uparrow}_j = i + 1  \} |}{\sum_{j \geq \lceil n^\delta \rceil} (j+ 1) d^{\uparrow}_j }$ .
\end{itemize}
We also let $E$ be the event where, in the $\lfloor n^\delta \rfloor$ first steps of the exploration of $\mathscr C_v$, a loop is discovered. Then, the following inequalities hold:
\begin{equation}\label{eq:couplinBounds}
\left(1- \PP(E)  \right)\PP( |\mathscr{T}^-| \geq n^\delta ) \leq \mathbb{P}\left( | \mathscr C_v | \geq n^\delta \right)  \leq \PP( |\mathscr{T}^+| \geq n^\delta ).
\end{equation}
Now, we prove that:
\begin{equation}\label{eq:T+size}
\begin{cases}
\PP( |\mathscr{T}^-| \geq n^\delta ) = \rho_n + \mathcal{O}(n^{\delta + \frac{1}{\gamma} -1}), \\
\PP( |\mathscr{T}^+| \geq n^\delta ) = \rho_n + \mathcal{O}(n^{\delta + \frac{1}{\gamma} -1}).
\end{cases}
\end{equation}
Since the proofs of these two bounds are similar, we only focus on the second one. Let $g_n^+(s)=\sum_{k \geq 0} q_k^+ s^k$ be the generating series of $(q_k^+)_{k\geq 0}$. Let $\rho_n^+$ be the smallest positive solution of $g_n^+(1-x)=1-x$. Then:
\begin{align}
\PP( |\mathscr{T}^+| \geq n^\delta ) 
&= \PP( |\mathscr{T}^+| = + \infty) + \PP( n^\delta \leq |\mathscr{T}^+| < + \infty) \nonumber \\
&= \rho_n^+ + o\left( \frac{1}{n} \right) \label{eq:T+Rho+}.
\end{align}

The difference between $\rho_n^+$ and $\rho_n$ can be written as follows:
\begin{align}
\rho_n^+ - \rho_n 
&= g_n^+(1-\rho_n^+) - g_n(1-\rho_n) \nonumber \\
&=  g_n(1-\rho_n^+) - g_n(1-\rho_n)  + g_n^+(1-\rho_n^+) - g_n(1-\rho_n^+) \nonumber \\
&= g_n'(1-\rho_n)(\rho_n - \rho_n^+) +  o\left( \rho_n^+ - \rho_n \right) + g_n^+(1-\rho_n^+) - g_n(1-\rho_n^+), \label{eq:Rho+MinusRho}
\end{align}
where in the last equality, we used a Taylor expansion. From the definition of $(q_k^+)_{k \geq 0}$, for all $k \geq 0$, it holds that:
\[  q_k^+ = p_k + \mathcal{O}\left(  \frac{n^{ \delta + \frac{1}{\gamma} }}{n} \right),  \]
where the error term is uniform in $k$. In particular, this implies that $g_n^+(1-\rho_n^+) - g_n(1-\rho_n^+)$ is of order $n^{\delta + \frac{1}{\gamma} -1}$. Inserting this into \eqref{eq:Rho+MinusRho}, we get
\[ \left( 1 - g_n'(1-\rho_n) + o(1)  \right)\left( \rho_n^+ - \rho_n \right) = \mathcal{O} \left( n^{\delta + \frac{1}{\gamma} -1} \right).   \]
By the assumptions of Lemma \ref{lemme:EstimateExp}, $\rho_n$ converges to the fixed point of $g_{\bm \pi}$, which is bounded away from $0$. Therefore, for large enough $n$, $g_n'(1-\rho_n)$ is bounded away from $1$. Hence
\begin{equation*}
|\rho_n^+ - \rho_n| = \mathcal{O} \left( n^{\delta + \frac{1}{\gamma} -1} \right).
\end{equation*}
Together with \eqref{eq:T+Rho+}, this implies \eqref{eq:T+size}.
\bigskip 

It remains to estimate the probability of the event $E$. During the first $\lfloor n^\delta \rfloor$ steps of the exploration of $\mathscr C_v$, the number of half-edges of the explored cluster is at most $n^\delta \times n^{1 / \gamma}$. Hence, the probability of creating a loop at each of these steps is of order $n^{\delta + \frac{1}{\gamma} - 1}$. Therefore, by the union bound:
\begin{equation}  \label{eq:ProbE}
\PP(E) = \mathcal{O} \left( n^{2 \delta + \frac{1}{\gamma} - 1}  \right) .
\end{equation}

Gathering \eqref{eq:couplinBounds}, \eqref{eq:T+size} and \eqref{eq:ProbE}, we get \eqref{eq:ProbSurv} and therefore Lemma \ref{lemme:EstimateExp}.
\end{proof}

\subsection{An infinite system of differential equations}\label{sec: EqDiff}
The aim of this section is to prove Lemma \ref{lemma: EqDiff}. In the following, we fix a probability distribution $\bm \pi = (\bm \pi_i)_{i \geq 0}$ which is supercritical in the sense of Definition \ref{defn:supercritical}.

First, we prove that the problem can be reduced to the study of another system of differential equations. Recall that, given a sequence $(\zeta_i)_{i \geq 0} \in \mathbb{R}^{\mathbb{Z}_+}$ such that $\sum_{i \geq 0} \zeta_i \leq 1$, the implicit quantity $\rho_{(\zeta_i)_{i \geq 0}}$ is defined through Equations \eqref{eq: DefnGenSer} and \eqref{eq:rho}.

\begin{lemma} \label{lemme:EqDiff2}
If the following system has a unique solution well defined on some maximal interval $[0,t_{\max}')$ for some $t_{\max}' > 0$:
\begin{equation}\tag{S'}\label{eq: EqDiff2}
\left\{ 
\begin{array}{lcl}
\frac{\mathrm{d}\zeta_i}{\mathrm{d}t} & = &  -  \frac{i \zeta_{i}}{\sum_{j \geq 0} j \zeta_j} + \frac{1}{\sum_{j \geq 0} j \zeta_j} \left( 1 - \frac{\sum_{j \geq 0} (j-1)j \zeta_j}{ \sum_{n \geq 0} j \zeta_j }  \right) \left( i \zeta_{i} - (i+1)\zeta_{i+1} \right) \\
\zeta_i(0) & = & \bm \pi_i,
\end{array}
\right.
\end{equation}
then the system \eqref{eq: EqDiff} has a unique solution well defined on a maximal interval $[0,t_{\max})$ for some $t_{\max}>0$.
\end{lemma}
\begin{proof}
Suppose that \eqref{eq: EqDiff2} has a unique solution $(\zeta_i)_{i \geq 0}$. Let $\phi$ be the unique function defined by 
\[ 
\begin{cases} 
 \phi'(t) \rho_{(\zeta_i(t))_{i\geq 0} } = 1 , \\
 \phi(0) = 0.
\end{cases}
\]
Then, for all $i \geq 0$, $(\zeta_i \circ \phi)'(t) = \frac{1}{\rho_{(\zeta_i(t))_{i\geq 0}} }  \times \rho_{(\zeta_i(t))_{i \geq 0}} f_i(\zeta_0(t), \zeta_1(t), \ldots ) =  f_i(\zeta_0(t), \zeta_1(t), \ldots )$ which proves that $(\zeta_i \circ \phi)_{i \geq 0}$ is a solution of the system \eqref{eq: EqDiff}.

Let $(z_i)_{ i \geq 0}$ be a solution of \eqref{eq: EqDiff}. Then, for all $t \geq 0$ where it is well defined,
\[  \sum\limits_{i \geq 0} z_i(t) = 1 - \int_0^{t}  \frac{1}{\rho_{ (z_i(t))_{i \geq 0} }} \mathrm{d}u =: 1 - \psi(t).   \]
Then, $(z_i \circ \psi^{-1} )_{i \geq 0}$ is a solution of \eqref{eq: EqDiff2}. Therefore, since $(\zeta_i)_{i \geq 0}$ is unique, $(z_i \circ \psi^{-1} \circ \phi)_{i \geq 0} = (\zeta_i \circ \phi)_{i \geq 0}$ is also solution of \eqref{eq: EqDiff}. In particular, this implies that
\[ \frac{-1}{ \rho_{  (z_i \circ \psi^{-1} \circ \phi (t))_{i \geq 0} }  }   = \frac{ \mathrm{d} }{ \mathrm{d} t} \left( \sum\limits_{i \geq 0}  z_i \circ \psi^{-1} \circ \phi  \right) (t) = \left( \psi^{-1} \circ \phi  \right)^{'} (t) \times  \frac{-1}{ \rho_{ (z_i \circ \psi^{-1} \circ \phi (t))_{i \geq 0} }  } .  \]
Therefore, $\psi = \phi$ only depends on $(\zeta_i)_{i \geq 0}$, yielding the uniqueness of the solution.
\end{proof}

We now exhibit a solution of \eqref{eq: EqDiff2}. Let $f_{ \bm \pi}(s) = \sum_{i \geq 0} \bm \pi_i s^i$ be the generating series associated to $\bm \pi$. Define $t_{\max}'$ to be the unique root  between $0$ and $1- {\bm \pi}_0$ of the equation
\[  \frac{f_{\bm \pi}'' \left(   f_{\bm \pi}^{-1} (1-t) \right) }{ f_{\bm \pi}'(1)} = 1. \]
For all $0 \leq t \leq t_{\max}'$ and $0\leq s \leq 1$, let
\begin{equation} \label{eq: SolSerGen} 
f(t,s) :=  f_{\bm \pi} \left(  f_{\bm \pi}^{-1}(1-t) - (1-s) \frac{f_{\bm \pi}'(f_{\bm \pi}^{-1}(1-t))}{f_{\bm \pi}'(1)} \right). 
\end{equation}
Note that this restriction to the interval $[0,t_{\max}')$ will play a crucial role in the analytic proof of the uniqueness of the solution. Moreover, from a probabilistic point of view, it corresponds to the range of times where $\frac{1}{1-t}f_{\bm \pi}(t,s)$ is the generating series of a supercritical probability law.

\begin{proposition} \label{prop:eqdif}
For all $0 \leq t \leq t_{\max}'$ and $i \geq 0$, let $\zeta_i(t) := [s^i]f(t,s)$ be the coefficient of $s^i$ in $f(t,s)$. Then $(\zeta_i)_{i \geq 0}$ is a solution of \eqref{eq: EqDiff2}.
\end{proposition}
\begin{proof}
It can be easily verified that $f(t,s)$ satisfies the following equation:
\[ \frac{\partial f}{\partial t} (t,s) = \frac{ \frac{\partial f}{\partial s} (t,s) }{\frac{\partial f}{\partial s} (t,1)} \left( (1-s) \frac{\frac{\partial^2 f}{\partial s^2} (t,1)}{\frac{\partial f}{\partial s} (t,1)} -1 \right). \]
By extracting the coefficient of $s^i$ we get that
\[ \frac{\mathrm{d}\zeta_i}{\mathrm{d}t} =  -  \frac{i \zeta_{i}}{\sum_{j \geq 0} j \zeta_j} + \frac{1}{\sum_{j \geq 0} j \zeta_j} \left( 1 - \frac{\sum_{j \geq 0} (j-1)j \zeta_j}{ \sum_{n \geq 0} j \zeta_j }  \right) \left( i \zeta_{i} - (i+1)\zeta_{i+1} \right),   \] 
which ends the proof the proposition.
\end{proof}

\begin{acks}
The authors are pleased to thank warmly an anonymous referee for its careful reading, suggestions, and pointing out a mistake in the original manuscript.
The first author would like to thank the ANR grants MALIN (Projet- ANR-16-CE93-0003) and PPPP (Projet-ANR-16-CE40-0016) for their financial support. The other three authors would like to thank the ANR grant ProGraM (Projet-ANR-19-CE40-0025) for its financial support. G.F. and L.M. also ackowledge the support of the Labex MME-DII (ANR11-LBX-0023-01).
\end{acks}


\end{document}